\documentclass{siamart251216}

\usepackage{float}
\newfloat{procedure}{htbp}{loa}[section]
\floatname{procedure}{Procedure}

\usepackage{graphicx}
\usepackage{epstopdf} 
\usepackage{hyperref}    
\usepackage{amssymb,latexsym,amsmath}
\usepackage{xcolor}
\usepackage{enumerate}
\usepackage{booktabs}
\usepackage{MnSymbol,bbding,pifont}

\usepackage{algorithmic}

\usepackage{tikz, pgfplots, mathtools, tikz-cd}

\pgfplotsset{compat=1.18}
\usetikzlibrary{hobby}
\usetikzlibrary{arrows}
\usetikzlibrary{positioning}
\usetikzlibrary{shapes,snakes}
\usetikzlibrary{cd}

\def\R{\mathbb{R}}
\def\C{\mathbb{C}}
\def\N{\mathbb{N}}
\def\F{\mathbb{F}}
\def\Z{\mathbb{Z}}

\DeclareMathOperator{\rank}{rank}

\DeclareMathOperator{\tr}{trace}

\DeclareMathOperator{\mvec}{vec}

\DeclareMathOperator{\argmin}{argmin}

\newtheorem{remark}[theorem]{{\sc Remark}}
\newtheorem{example}[theorem]{Example}

\newcommand{\hide}[1]{}

\numberwithin{equation}{section}

\begin{document}

\title{Singular vector spaces for computing the structured distance to singularity \thanks{Submitted to the editors on March 1st, 2026}}

\author{
Lauri Nyman\thanks{University of Manchester, Department of Mathematics, M13 9PL, Manchester, England (\email{lauri.nyman@manchester.ac.uk}). Supported by an Engineering and Physical Sciences Research Council (EPSRC) grant EP/Z533786/1.}}

\date{}

\maketitle

\begin{abstract}
Finding the distance to singularity for a matrix is a ubiquitous problem in numerical linear algebra, and is elegantly solved by the Eckart-Young-Mirsky theorem. Its structured variant naturally emerges when one considers structured matrices, and wants to preserve their structure. Recent work has shown that this problem is particularly important for a class of matrix nearness problems that either entirely or partly reduce to a structured distance to singularity problem. In this work, we propose a new framework for addressing this problem, based on the concept of singular vector spaces, that is, linear subsets of the set of singular matrices. We analyze singular vector spaces in the context of this problem, prove new results, and detail how a specific subfamily of singular vector spaces can be incorporated into a practical algorithm. The resulting algorithm is based on globally minimizing a certain objective function alternatingly in its arguments. Numerical experiments demonstrate that this new algorithm is remarkably faster than the state-of-the-art, while the quality of the output remains comparable. This makes it possible to solve problems of much larger size than what was previously possible. %This speedup is impactful given that existing algorithms are impractically slow for larger problems. 

% We show that some of the existing work in the literature, including the state-of-the-art, can be understood with the framework of singular vector spaces. This framework shows that state-of-the-art variable projection approaches are optimal in a precise sense, related to the dimension of the inner optimization problem.

\end{abstract}

\begin{keywords}
structured matrix, matrix nearness problems, least-squares problems, regularization
\end{keywords}

\begin{AMS}
15A99, 65F22, 65F99, 65K99
\end{AMS}

\section{Introduction}

In this paper, we consider the problem of finding the structured distance to singularity. This problem can be stated as follows: Given a matrix $A \in \F^{n \times n}$, find the matrix $B \in \F^{n \times n}$ nearest to $A$ such that $B$ is singular and has a prescribed linear structure, where $\F$ is a field of interest, typically $\R$ or $\C$. This problem plays a fundamental role in many matrix nearness problems. Indeed, questions related to the eigenvalues of a matrix $A$ \cite{Sicilia, imajnapaper, NP21}
naturally reduce to those regarding the singularity of $A - \lambda I$. Moreover, questions related to the roots of polynomials are equivalent to those of eigenvalues of its linearizations, which are linearly structured matrices. More generally, questions related to the eigenvalues of matrix polynomials can be seen as structured variants of those related to matrix pencils $A - \lambda B$ \cite{gohberg}, which is also an active field of research \cite{DHpair, NN25}.

% While the eigenvalues of a non-monic matrix polynomial can be described in terms of the eigenvalues of the linearizing matrix pencil \cite{DHpair, NN25}. 

% The eigenvalues of matrix polynomials are given by those of its linearization, which in general is a matrix pencil. 

% In the special case when the matrix polynomial is monic, these questions reduce to questions about structued matrices. 

In \cite{oracle}, the authors demonstrate many reductions from other matrix nearness problems to the structured distance to singularity problem. For example, the problem of finding the nearest singular matrix pencil \cite{nearsingpen, andrii, DNN24, glm, KV15}, or more generally, matrix polynomial \cite{bora, ghl, Gnazzo24}, can be seen as a structured variant of the nearest singular matrix problem. Other examples include the problem of finding an approximate GCD for two univariate polynomials \cite{oracle, UseM17gcd, Zeng}, and the problem of finding the distance to instability \cite{oracle, Sicilia}.

Existing methods for computing the structured distance to singularity can be roughly divided into two categories. The first category consists of variable projection type methods, which Markovsky and Usevich pioneered in their collection of papers \cite{MarU13missing,MarU14software,UseM14manifold,UseM14,UseM17gcd}, while notable recent work includes \cite{oracle}. The second category is characterized by a two-level iteration procedure, and is based on matrix differential equations \cite{guglielmi_book, Sicilia}. In addition, several papers have derived theoretical bounds, and in some cases exact formulas, for the solution of certain special cases of this problem \cite{Noschese, SharPraj, sieg}. For the general case, no solution formula exists, and existing implementations are impractically slow for matrices of order in the thousands. This makes it practically impossible to address applications that deal with large matrices. 

A central concept of this paper is that of a singular vector space. We call a matrix subspace of $\F^{n \times n}$ a singular vector space if each of its elements is singular. Properties of singular vector spaces have been studied in \cite{Lovasz, eisenbud1988vector, flanders,Fillmore}. The approach of this paper, and implicitly, that of state-of-the-art variable projection methods \cite{oracle, MarU13missing}, are based on solving least-squares problems in singular vector spaces. A straightforward strategy is to utilize singular vector spaces whose dimension is the highest possible (for reasons that will be elaborated on in Section \ref{sec:problem}). A classical result by Flanders \cite{flanders} shows that a sharp upper bound for the dimension of singular vector spaces is $n^2 - n$. Moreover, spaces of maximal dimension necessarily have a shared nontrivial vector in the left or right kernel. An upper bound for the dimension of singular vector spaces of fundamentally different type, without a common nontrivial vector in the left or right kernel, is $n^2 - 2n+2$ \cite{Fillmore}. 

% A related problem, the problem of finding the highest dimension for subspaces where each nonzero element is nonsingular, has also seen attention in the literature \cite{adams, hoi}. This problem is considered solved in the sense that a solution formula exists: for vector subspaces of $\R^{n \times n}$, the highest possible dimension such that each nonzero element is invertible is given by the Radon-Hurwitz numbers (see \cite{adams}). These numbers depend on $n$ in a more non-trivial manner than in the singular case. For vector subspaces of $\C^{n \times n}$, it is folklore that the highest dimension is one for all $n$. Indeed, this can be seen by considering the polynomial $\det(A+xB)$, where $A,B \in GL_n(\C)$ are linearly independent, and invoking the fundamental theorem of algebra.

Singular vector spaces and their properties have not seen much attention in the context of matrix nearness problems, although state-of-the-art variable projection methods, such as \cite{oracle,MarU13missing}, rely on their use. %In particular, variable projection techniques benefit from using the highest possible dimension of singular vector spaces (see Section \ref{sec:problem}). 
Loosely speaking, variable projection refers to the concept of finding a closed-form solution formula for some of the optimization variables, and solving the remaining of the optimization problem by numerically optimizing over the remaining variables. Distance problems with a linear feasible set typically allow for a closed-form solution formula, as this can be achieved by projecting orthogonally to the feasible set. This is what makes singular vector spaces such a useful tool: they correspond to a linear subset of the feasible set, and allow us to access a closed-form solution formula for a constrained version of the original problem.

In this paper, we propose a new method for finding the structured distance to singularity, based on the concept of singular vector spaces. We analyze the properties of singular vector spaces in relation to the structured distance to singularity problem, and propose a practical algorithm based on a certain subfamily of singular vector spaces. The resulting algorithm, outlined in Subsection \ref{sec:regularization}, globally minimizes a certain objective function alternatingly in its arguments. This new approach yields a very sizeable improvement to the state-of-the-art \cite{oracle, guglielmi_book} in terms of running time, while the output remain comparable. 

\section{Linear structures inside the set of singular matrices}

Let $n \geq 2$ and let $\mathcal{S}_n(\F)$ denote the set of singular matrices of size $n \times n$ over the field $\F$. In other words, $\mathcal{S}_n(\F) := \{ A \in \F^{n \times n} \ : \ \det A = 0 \}$. In this writing, the field $\F$ is chosen to be either the field of real numbers $\R$ or the field of complex numbers $\C$. In the following, we omit the subindex and the field whenever they are clear from the context, that is, we write $\mathcal{S}$ in place of $\mathcal{S}_n(\F)$. In this section, we focus on characterizing the linear structure inside $\mathcal{S}_n(\F)$. Subsection \ref{sec:motivation} gives a brief exposition and motivation for the study of the linear structure inside $\mathcal{S}_n(\F)$, while in Subsection \ref{sec:singvec}, we focus on characterizing singular vector spaces.  

\subsection{Linear geometry in $\mathcal{S}_n(\F)$} \label{sec:motivation}
As a warm-up, let us consider the following characterization for vector subspaces, the proof of which is left as a simple exercise.
\begin{proposition}\label{proposition:vec_space}
    A set $S \subset \F^{n}$ is a vector space if and only if for every pair of points $(x,y) \in S^2$ there exists an $\F$-affine subspace $A_{x,y}$ of $S$ such that $x,y \in A$.
\end{proposition}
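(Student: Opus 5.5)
The plan is to prove the two implications separately. One caveat about the statement comes first. Read literally, with $A_{x,y}$ an arbitrary affine subspace, the ``if'' direction fails: a single affine line in $\F^n$ not passing through the origin satisfies the condition but is not a vector space. I will therefore prove the statement under the assumption that $0 \in S$. This is the situation of interest, since $0 \in \mathcal{S}_n(\F)$. The alternative reading, where each $A_{x,y}$ is required to be a linear subspace, then follows as a special case.

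For the ``only if'' direction, suppose $S$ is a vector space. For any pair $(x,y) \in S^2$ I take $A_{x,y} := S$ itself. A vector space is in particular an affine subspace, it contains $x$ and $y$, and it is trivially contained in $S$, so nothing more is needed.

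For the ``if'' direction, I will show that $S$ is closed under scalar multiplication and under addition; together with $0 \in S$ this makes $S$ a vector space.

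\emph{Scalar multiplication.} Fix $x \in S$ and apply the hypothesis to the pair $(x,0)$. This gives an affine subspace $A_{x,0} \subseteq S$ containing both $0$ and $x$. An affine subspace containing $0$ is linear, so it contains $\lambda x$ for every $\lambda \in \F$. Hence $\lambda x \in S$.

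\emph{Addition.} Fix $x,y \in S$ and apply the hypothesis to the pair $(x,y)$. The affine subspace $A_{x,y}$ contains every affine combination of $x$ and $y$, in particular the midpoint $\tfrac12 x + \tfrac12 y$; here I use that $\mathrm{char}\,\F \neq 2$, which holds for $\F \in \{\R,\C\}$. So the midpoint lies in $S$, and by the scalar step $2 \cdot \tfrac12 (x+y) = x+y \in S$.

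The only real obstacle is the one flagged at the start: the condition must be combined with some way of reaching the origin. Either $0 \in S$ or linearity of the $A_{x,y}$ is needed, because affine lines alone do not force closure under scaling. Once the scalar step is available, the midpoint trick reduces additive closure to a one-line affine-combination argument.
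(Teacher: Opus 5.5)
Your argument is correct. The paper gives no proof of this proposition (it is ``left as a simple exercise''), so there is no paper argument to compare against, and your write-up supplies the missing exercise.

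Your caveat is justified. Read literally, the ``if'' direction is false: $S=\{e_1\}$ satisfies the hypothesis with $A_{e_1,e_1}=\{e_1\}$, a $0$-dimensional affine subspace, yet $S$ is not a vector space. Some normalization such as $0\in S$ is therefore needed. This costs nothing for the paper's use, since $0\in\mathcal{S}_n(\F)$.

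Two small remarks. First, the midpoint step is exactly where $\mathrm{char}\,\F\neq 2$ enters, and it is genuinely needed. Over $\F_2$, every two-point set is an affine line, so $\{0,e_1,e_2\}\subset\F_2^2$ satisfies your version of the hypothesis but is not a subspace. Second, when you say the linear-subspace reading ``follows as a special case,'' you also need $S\neq\emptyset$ in order to extract $0\in S$ from some $A_{x,x}$. The empty set satisfies either version of the hypothesis vacuously without being a vector space.
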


As $\mathcal{S}_n(\F)$ is not a vector space, not every pair of singular matrices can be connected with a single subspace that is included in $\mathcal{S}_n(\F)$. However, every pair of singular matrices can be connected via a sequence of two subspaces (rather trivially, as $\mathcal{S}_n(\F)$ is closed under scalar multiplication; for $A,B\in \mathcal{S}_n(\F)$, it holds that $\{tA \ | \ t \in \F\}$ and $\{tB \ | \ t \in \F\}$ intersect at origin). This property holds because $\mathcal{S}_n(\F)$ has one of the two main properties of a vector space: it is closed under scalar multiplication. The other main property, being closed under addition, does not hold in general. For example, $e_1 e_1^T, e_2 e_2^T \in \F^{2\times 2}$ are clearly singular, but their sum is the identity matrix, which is clearly not singular. A subset of singular matrices for which both main properties of a vector space hold is called a \textit{singular vector space}. %In other words, singular vector spaces are vector subspaces inside $\mathcal{S}_n(\F)$.
The following example shows that there exist high dimensional singular vector spaces. In fact, every element of $\mathcal{S}_n(\F)$ belongs to such a vector space.
% \begin{example} \label{example:highdim}
%     Let $A \in \mathcal{S}_n(\F)$. Let $V \in \F^{n \times (n-1)}$ denote a full-rank matrix such that $\mbox{im} (A^*) \subset \mbox{im} (V)$. Then, the set $S:=\{BV^* : B \in \F^{n \times (n-1)}\}$ is an $(n^2 - n)$-dimensional vector space over $\F$ such that $A \in S$.
% \end{example}
\begin{example} \label{example:highdim}
    % Let $A \in \mathcal{S}_n(\F)$. The sets $S:=\{B \in \F^{n \times n} : \mbox{im}(B) \subset \mbox{im}(A)\}$ and $S^*:=\{B \in \F^{n \times n} : \mbox{im}(B) \subset \mbox{im}(A)\}$ are singular vector spaces of dimension $n \dim \mbox{im}(A)$. 
    Let $A \in \mathcal{S}_n(\F)$, and let $V \in \F^{n \times (n-1)}$ denote a full-rank matrix such that $\mbox{im} (A^*) \subset \mbox{im} (V)$. Then, the set $S:=\{C \in \F^{n \times n} : \mbox{im}(C^*) \subset \mbox{im}(V)\} = \{BV^* : B \in \F^{n \times (n-1)}\}$ is an $(n^2 - n)$-dimensional vector space over $\F$ such that $A \in S$.
\end{example}
Example \ref{example:highdim} highlights that much of the geometry of the set $\mathcal{S}_n(\F)$ is in fact linear. In particular, the singular vector space $S$ of Example \ref{example:highdim} has codimension $n$, and hence its ratio with the dimension of the ambient space $\F^{n \times n}$ goes to zero as $n$ goes to infinity. Therefore, in this precise relative sense, $\mathcal{S}_n(\F)$ approaches a linear set in the limit $n \rightarrow \infty$. 

State-of-the-art methods for finding the structured distance to singularity, that are based on variable projection \cite{oracle, MarU13missing}, can be understood in the framework of singular vector spaces. In particular, they rely on singular vector spaces of Example \ref{example:highdim}, by using the formulation given in Example \ref{ex:same_kernel} below. This example defines these type of singular vector spaces more succinctly in terms of a shared vector in the kernel.
\begin{example} \label{ex:same_kernel}
    Let $v \in \F^n \backslash \{0\}$ and let $\mathcal{S}_v = \{ B \in \F^{n\times n} : B v = 0\}$ and $\mathcal{S}^*_v = \{ B \in \F^{n\times n} : B^* v = 0\}$. It holds that the sets $\mathcal{S}_v$ and $\mathcal{S}^*_v$ are singular vector spaces.
\end{example}
It is not difficult to see that the singular vector space $\mathcal{S}_v$ of Example \ref{ex:same_kernel} coincides with the set $\{BV^* : B \in \F^{n \times (n-1)}\}$ defined in Example \ref{example:highdim}, with $v$ being orthogonal to $\mbox{im}(V)$.

This paper aims to provide a general framework for the use of singular vector spaces in matrix nearness algorithms, allowing for differently structured spaces than $\mathcal{S}_v$. With this goal in mind, Subsection \ref{sec:singvec} examines the structure of singular vector spaces from a theoretical point of view, while Section \ref{sec:problem} proposes a practical algorithm based on this analysis for the structured distance to singularity problem. 

%The aim of Subsection \ref{sec:singvec} is to examine the structure of singular vector spaces, while keeping in mind the connection with matrix nearness problems. 

\subsection{Characterization of singular vector spaces} \label{sec:singvec}

We start by noting that not all elements of a singular vector space necessarily have the same non-zero vector in the left or right kernel. This is demonstrated by the following example.
\begin{example} \label{ex:not_same_kernel}
    Consider the set
    \begin{align*}
        S = \left\{ \ 
        \begin{bmatrix}
            a_{1} & a_{2} & a_{3} \\
            0 & 0 & a_{4}\\
            0 & 0 & a_{5} 
        \end{bmatrix} \ : \ a_i \in \F \ \right\}.
    \end{align*}
    It holds that $S$ is a singular vector space with $$\bigcap_{A \in S}\mbox{ker} A = \bigcap_{A \in S}\mbox{ker} A^* = \{0\}.$$
\end{example}
While the dimension of the singular vector space $S$ in Example \ref{ex:not_same_kernel} is less than $n^2 - n$, it leaves the door open that, potentially, there might exist a singular vector space with a more exotic structure than that of $\mathcal{S}_v$ or $\mathcal{S}^*_v$, whose dimension exceeds $n^2 - n$. However, the following theorem by Flanders shuts the door for this possibility. %This result can be proved for real matrices by using the Hurwitz-Radon numbers in the cases $n=1,2,4,8$, see \cite{adams}. The Hurwitz-Radon numbers give the solution to the sibling problem, the problem of finding the highest dimension for a real matrix subspace where every non-zero element is invertible. For $n=1,2,4,8$, it so happens that the corresponding Hurwitz-Radon number equals $n$. As a singular vector space can intersect such an invertible subspace only zero dimensionally, the result follows. Theorem \ref{prop:dim} below states this result for general $n$.

\begin{theorem}[\cite{flanders}]\label{prop:dim}
    Let $S \subset \F^{n\times n}$ be a vector space of singular matrices over the field $\F$. It holds that $\dim S \leq n(n-1)$.
\end{theorem}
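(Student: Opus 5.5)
Since every element of $S$ is singular, I will obtain the bound by intersecting $S$ with a subspace all of whose nonzero elements are invertible, and then counting dimensions. The natural choice over an infinite field such as $\R$ or $\C$ is the space of upper triangular matrices with constant diagonal, but that space contains nonzero nilpotent matrices and hence fails. The easy fix is to take a matrix of dimension exactly $n$.

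\textbf{Step 1 (easy approach, special $n$).} When $\F=\R$ and $n\in\{1,2,4,8\}$, the Hurwitz--Radon construction gives an $n$-dimensional space $W$ in which every nonzero element is invertible. Then $S\cap W=\{0\}$, so $\dim S+n\le n^2$. This does not work in general, because for $\F=\C$ and $n\ge2$ such a $W$ has dimension at most $1$. A different argument is therefore needed.

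\textbf{Step 2 (general approach via a rank argument, the main step).} I will follow Flanders' strategy. Let $r$ be the maximal rank attained on $S$, so that $r\le n-1$ because every element is singular. Choose $A_0\in S$ of rank $r$. After replacing $S$ by $PSQ$ for invertible $P,Q$, which preserves dimension and singularity, I may assume
\[
A_0=\begin{bmatrix} I_r&0\\0&0\end{bmatrix}.
\]
Write a general $B\in S$ in block form $\begin{bmatrix} B_{11}&B_{12}\\ B_{21}&B_{22}\end{bmatrix}$. The matrix $A_0+tB$ has rank at most $r$ for all $t\in\F$. Taking $(r+1)\times(r+1)$ minors built from $I_r+tB_{11}$ bordered by one row and one column of the $B_{21}$, $B_{12}$, $B_{22}$ blocks, and using that $\F$ is infinite, I will expand these polynomials in $t$. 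The coefficient of $t$ gives $B_{22}=0$. The coefficient of $t^2$ gives $B_{21}B_{12}=0$.

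\textbf{Step 3 (counting).} After Step 2, $S$ embeds linearly into
\[
\{(B_{11},B_{12},B_{21}) : B_{21}B_{12}=0\}.
\]
The $B_{11}$ part contributes at most $r^2$ dimensions. Let $U$ be the span of the $B_{12}$ blocks and $L$ the span of the $B_{21}$ blocks. Polarizing $B_{21}B_{12}=0$ over the linear space $S$ gives $B_{21}B'_{12}+B'_{21}B_{12}=0$. The aim is then to show $\dim U+\dim L\le r(n-r)$, using the column space of $U$ and the row space of $L$ to produce complementary subspaces. Together these give
\[
\dim S\le r^2+r(n-r)=rn\le n(n-1).
\]

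\textbf{Main obstacle.} The hard step is the bound $\dim U+\dim L\le r(n-r)$, because the polarized identity couples $U$ and $L$ rather than making them orthogonal termwise. I will first try to handle this by induction on $n$, restricting to the subspace of $S$ whose $B_{12}$ block vanishes.
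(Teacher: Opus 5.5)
The paper does not prove this theorem; it only cites Flanders, so your argument has to stand on its own. Step 2 is sound. It is the standard Flanders--Meshulam lemma: over an infinite field, the $t$ and $t^2$ coefficients of the bordered $(r+1)\times(r+1)$ minors give $B_{22}=0$ and $B_{21}B_{12}=0$. Step 1 is only a special case and plays no role.

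The gap is in Step 3. The inequality $\dim U+\dim L\le r(n-r)$ that you are aiming for is false, where $U$ and $L$ are the spans of \emph{all} $B_{12}$ blocks and \emph{all} $B_{21}$ blocks. Take $n=3$ and
\[
S=\left\{\begin{bmatrix} a&0&x\\ 0&a&y\\ y&-x&0\end{bmatrix} : a,x,y\in\F\right\},
\]
which is obtained from the $3\times 3$ skew-symmetric matrices by left multiplication with an invertible matrix. In this space:
\begin{itemize}
\item every element has determinant $axy-axy=0$, and the maximal rank is $r=2$;
\item $A_0=\mbox{diag}(1,1,0)\in S$ is already normalized;
\item $B_{22}=0$ and $B_{21}B_{12}=yx-xy=0$, as Step 2 requires;
\item yet $U=\F^{2\times 1}$ and $L=\F^{1\times 2}$, so $\dim U+\dim L=4>2=r(n-r)$.
\end{itemize}
The estimate $\dim S\le r^2+\dim U+\dim L$ replaces the projection of $S$ onto the pair $(B_{12},B_{21})$ by the product $U\times L$. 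The coupling you noticed is exactly what makes that projection much smaller than $U\times L$. No induction on $n$ can rescue an inequality that already fails for $n=3$.

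The subspace you mention at the end (elements of $S$ with $B_{12}=0$) is the right object, but it should enter through rank--nullity, not induction.
\begin{itemize}
\item Let $\pi:S\to\F^{r\times(n-r)}$, $B\mapsto B_{12}$. Then $U=\pi(S)$ and $\dim S=\dim U+\dim\ker\pi$.
\item Since $C_{22}=0$, the map $C\mapsto(C_{11},C_{21})$ is injective on $\ker\pi$. Hence $\dim\ker\pi\le r^2+\dim L_0$, where $L_0:=\{C_{21}: C\in S,\ C_{12}=0\}$.
\item For $C\in\ker\pi$ and $B\in S$, apply $B_{21}B_{12}=0$ to both $B$ and $B+C$. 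Because $C_{12}=0$, this gives $C_{21}B_{12}=0$ termwise, with no cross term.
\item Let $W\subseteq\F^r$ be the sum of the column spaces of the elements of $U$, and let $w=\dim W$. Then $\dim U\le w(n-r)$. Every element of $L_0$ vanishes on $W$, so $\dim L_0\le (n-r)(r-w)$.
\end{itemize}
Therefore $\dim S\le r^2+r(n-r)=rn\le n(n-1)$.
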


Example \ref{example:highdim} shows that the bound given by Theorem \ref{prop:dim} is tight. Moreover, Proposition \ref{prop:dim2} shows that a singular vector space with the highest possible dimension $n^2-n$ is necessarily of form $\mathcal{S}_v$ or $\mathcal{S}^*_v$ from Example \ref{ex:same_kernel}. More precisely, it states that any singular vector space with dimension more than $n^2-2n+2$ is necessarily a subset of $\mathcal{S}_v$ or $\mathcal{S}^*_v$ for some $v$.
\begin{proposition}[\cite{Fillmore}]\label{prop:dim2}
    Let $S \subset \F^{n\times n}$ be a singular vector space over the field $\F$ with dimension $d > n^2-2n+2$. It holds that all elements of $S$ either share a common nonzero vector in the left kernel or in the right kernel.
\end{proposition}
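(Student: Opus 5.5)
The plan is to reduce to a normal form built around an element of maximal rank, and then count dimensions. Let $r$ be the maximal rank of an element of $S$. Flanders' theorem (Theorem~\ref{prop:dim}) is really a rank-bounded statement: $\dim S \le nr$ for a space of matrices of rank at most $r$. So if $r \le n-2$, then $\dim S \le n(n-2) < n^2-2n+2$ and there is nothing to prove. Hence assume $r = n-1$.

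Fix $A_0 \in S$ of rank $n-1$. After invertible changes of basis on the left and right, which preserve singularity, dimension and the existence of a common left or right kernel vector, take
$$A_0=\begin{bmatrix} I_{n-1} & 0\\ 0 & 0\end{bmatrix}.$$
Write each $B\in S$ as
$$B=\begin{bmatrix} B_{11} & x_B\\ y_B & \beta_B\end{bmatrix},$$
with $x_B$ a column and $y_B$ a row. Since $\det(A_0+tB)=0$ for all $t$, I would expand in $t$ using the Schur complement:
$$\det(A_0+tB)=\det(I+tB_{11})\,\bigl(t\beta_B - t^2 y_B (I+tB_{11})^{-1}x_B\bigr).$$
The coefficients then give $\beta_B=0$ and $y_B B_{11}^k x_B=0$ for all $k\ge 0$. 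The same identities applied to $A_0+sB+tC+uK$ for $B,C,K\in S$ give polarized multilinear relations. The two I need are
$$y_B x_C + y_C x_B = 0 \quad\text{and}\quad y_B K_{11} x_C + y_C K_{11} x_B = 0$$
whenever $K$ has $x_K=0$ and $y_K=0$.

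Now consider the linear map $\pi:S\to \F^{n-1}\oplus \F^{1\times(n-1)}$, $B\mapsto (x_B,y_B)$. Let $W=\operatorname{im}\pi$, $X=\{x_B\}$ with $a=\dim X$, and $Y=\{y_B\}$ with $b=\dim Y$. If $Y=0$, then every element of $S$ has zero last row, so $e_n$ is a common left kernel vector. If $X=0$, then $e_n$ is a common right kernel vector. So assume $a,b\ge 1$. The first polarized relation says $W$ is totally isotropic for the split symmetric form $\langle (x,y),(x',y')\rangle = yx'+y'x$, whose maximal isotropic subspaces have dimension $n-1$; in particular $\dim W \le n-1$. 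The kernel of $\pi$ is the set $\{K\in S: x_K=0,\ y_K=0\}$, which embeds in $\F^{(n-1)\times(n-1)}$ via $K\mapsto K_{11}$. The second relation imposes linear conditions on $K_{11}$. The goal is to show they cut out codimension at least $ab$, heuristically ``$yK_{11}x=0$ for $x\in X$, $y\in Y$''. That would give
$$\dim S\le (n-1)^2-ab+\dim W.$$
Here $\dim W\le a+b$, and also $\dim W\le n-1$. Using $\dim W\le a+b$,
$$(n-1)^2-ab+a+b=(n-1)^2+1-(a-1)(b-1)\le n^2-2n+2,$$
which contradicts $d>n^2-2n+2$.

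The main obstacle is the codimension claim for $\ker\pi$. The polarization only yields the symmetrized condition $y_C K_{11}x_B + y_B K_{11}x_C=0$, where each pair comes from elements $B,C$, rather than the separated condition $y K_{11} x=0$. To handle this I would first try to choose a basis of $W$ adapted to the isotropic form, splitting it into a part projecting onto $X$ and a part projecting onto $Y$, to extract enough independent functionals on $K_{11}$. I would also exploit freedom in the base point: replacing $A_0$ by a generic $A_0+\varepsilon B$ perturbs the decomposition and may separate the two terms.

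If that fails, the fallback is a direct induction on $n$. Restrict to the $(n-1)\times(n-1)$ block space $\{K_{11}\}$, apply the statement for $n-1$ to $\ker\pi$, and feed the resulting common kernel vector back into the isotropy relations to build a common kernel vector for all of $S$.
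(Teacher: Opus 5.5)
\paragraph{Summary.} There is a genuine gap: the codimension claim at the heart of your count is false. The argument can be repaired inside your own framework, as described in the second paragraph. (The paper gives no proof of this statement; it cites Fillmore et al., so there is no in-paper argument to compare against.)

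\paragraph{The gap.} Your reductions are sound. The rank-bounded form of Flanders' theorem ($\dim S\le nr$) is what Flanders actually proved, although it is stronger than Theorem~\ref{prop:dim} as quoted here, and it disposes of $r\le n-2$. The Schur-complement expansion correctly gives $\beta_B=0$ and $y_BB_{11}^kx_B=0$. Both polarized identities are right; the second is the $stu$-coefficient of the cubic form $M\mapsto y_MM_{11}x_M$ on $S$. The problem is the codimension claim for $\ker\pi$. It is not merely unproven but false, so the inequality $\dim S\le (n-1)^2-ab+\dim W$ cannot be obtained. Take a signed-permutation left multiple of the $3\times 3$ skew-symmetric matrices:
\[
S=\left\{\begin{bmatrix} c & 0 & -q\\ 0 & c & p\\ -p & -q & 0\end{bmatrix} : c,p,q\in\F\right\}.
\]
\begin{itemize}
\item Every element has determinant $c\,pq-q\,cp=0$, and $A_0=\mathrm{diag}(1,1,0)\in S$.
\item In your block notation, $x_B=(-q,p)^T$, $y_B=(-p,-q)$ and $\beta_B=0$, so $a=b=2$ and $\dim W=2$.
\item $\ker\pi=\{p=q=0\}$ gives $K_{11}=cI_2$, a space of dimension $1>(n-1)^2-ab=0$. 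Your bound would therefore read $3\le 2$.
\item The heuristic separated condition also fails: $K_{11}=I_2$, $y=(-1,0)$ (from $p=1$) and $x=(-1,0)^T$ (from $q=1$) give $yK_{11}x=1$.
\item Moving the base point does not help. Every nonzero element of this $S$ is carried to $A_0$ by an equivalence $M\mapsto EMF$ mapping $S$ onto itself, and $a$, $b$, $\dim W$, $\dim\ker\pi$ do not depend on the normalizing equivalence.
\end{itemize}
Your final count survives in this example only because $\dim W\le a+b$ is very loose. You are trading an over-strong codimension estimate against an over-weak estimate on $\dim W$.

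\paragraph{The repair.} The fix stays inside your framework and makes $a$, $b$ and the isotropy bound unnecessary.
\begin{enumerate}
\item If $a,b\ge1$, fix one $C\in S$ with $x_C\neq0$ and $y_C\neq0$. It exists because a vector space is never the union of two proper subspaces.
\item Your second identity reads $\tr\bigl(K_{11}(x_Cy_B+x_By_C)\bigr)=0$ for all $K\in\ker\pi$ and $B\in S$.
\item The linear map $\phi:W\to\F^{(n-1)\times(n-1)}$, $\pi(B)\mapsto x_Cy_B+x_By_C$, satisfies $\dim\ker\phi\le 1$. Suppose $x_Cy_B=-x_By_C$. Then $x_B=0$ forces $y_B=0$ and conversely. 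Otherwise, comparing the column and row spaces of these nonzero rank-one matrices gives $(x_B,y_B)=\alpha(x_C,-y_C)$.
\item Hence $\{K_{11}:K\in\ker\pi\}$ lies in the trace-annihilator of a subspace of dimension at least $\dim W-1$. Since $K\mapsto K_{11}$ is injective on $\ker\pi$, we get $\dim\ker\pi\le (n-1)^2-\dim W+1$, and therefore
\[
\dim S=\dim\ker\pi+\dim W\le (n-1)^2+1=n^2-2n+2 .
\]
\item This contradicts $d>n^2-2n+2$. The remaining cases $a=0$ or $b=0$ then give the common kernel vector exactly as you describe, and the inductive fallback is not needed.
\end{enumerate}
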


Next, we aim to characterize singular vector spaces that have a nice structure from an algorithmic point of view. To this end, let us denote by $e_{i,j}$ the matrix that has exactly one non-zero element which is in position $(i,j)$ and is equal to 1. In the following, we assume that our singular vector space admits a basis $\mathcal{B} \subset \{e_{i,j} \}_{i,j}$, that is, $\mathcal{B}$ is a subset of the standard basis. Results proved under this assumption naturally extend to all vector spaces whose bases can be transformed into a subset of the standard basis via a rank-preserving linear transformation\footnote{Rank-preserving linear transformations $\F^{n\times n} \rightarrow \F^{n\times n}$ are either of form $M \mapsto X M Y^*$ or $M \mapsto X M^T Y^*$, where $X,Y \in \mbox{GL}(n, \F)$, as originally proved in \cite[Theorem 1]{Marcus_Moyls_1959} for the field of complex numbers, but the result holds more generally for any algebraically closed field with characteristic zero.}. As we will see in Subsection \ref{sec:newmethod}, this assumption on the basis is useful in designing a computational algorithm. In particular, for a given singular matrix $A$, it is easy to construct singular vector spaces of this form that include $A$. 

The following lemma characterizes a family of singular vector spaces. From here on, the notation $M_{I,J}$ stands for a submatrix of $M$ that is formed by keeping the rows and columns given by the index sets $I$ and $J$, respectively. Moreover, we denote by $M^{-*}$ the conjugate transpose of $M^{-1}$. 
\begin{proposition} \label{prop:blocks_are_singular}
    Let $S$ be a vector subspace of $\F^{n \times n}$ for which there exist $X, Y \in \mbox{GL}(n, \F)$ and index sets $I$ and $J$ with $|I| + |J| = n+1$ such that $(X^{-1}M Y^{-*})_{I,J}=0$ for all $M \in S$. Then, $S$ is a singular vector space.
\end{proposition}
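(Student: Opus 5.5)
Since $X$ and $Y$ are invertible, $M$ is singular exactly when $N := X^{-1} M Y^{-*}$ is singular, because $\det N = \det(X)^{-1}\det(M)\overline{\det(Y)}^{-1}$ (or simply $\det(Y)^{-1}$ when $\F=\R$). The map $M \mapsto X^{-1}MY^{-*}$ is also linear. So it is enough to show one fact: every matrix $N\in\F^{n\times n}$ with a zero block $N_{I,J}=0$, where $|I|+|J|=n+1$, is singular. Once this holds, every element of $S$ is singular. Since $S$ is a vector subspace by assumption, it is then a singular vector space.

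For the reduced claim I will argue with column ranks. Let $I^c$ be the complement of $I$ in $\{1,\dots,n\}$, so $|I^c| = n-|I| = |J|-1$. Consider the $|J|$ columns of $N$ indexed by $J$. By hypothesis each of them has zero entries in every row indexed by $I$. Hence these columns lie in the coordinate subspace $\{x\in\F^n : x_i = 0 \text{ for } i\in I\}$, whose dimension is $|I^c| = |J|-1$. Then $|J|$ vectors sit in a space of dimension $|J|-1$, so they are linearly dependent. This gives a nonzero $z\in\F^n$ supported on $J$ with $Nz=0$, and therefore $N$ is singular.

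One could instead say that the submatrix $N_{I^c,J}$ has more columns than rows and so has a nontrivial kernel, which is the same argument. Alternatively, one could permute rows and columns to bring $N$ to block form and appeal to the Frobenius–König theorem. The column argument is more direct, so I will use it.

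There is no real obstacle here. The only points that need care are the following.
\begin{enumerate}
\item The reduction must use the fact that congruence-type transformations by invertible $X$ and $Y^{*}$ preserve singularity.
\item The degenerate cases need checking. If $|J| = n+1-|I|$ and $|I|=n$, then $|J|=1$ and a whole column is zero. If $|I|=1$, then $|J|=n$ and a whole row is zero. In both cases the same argument applies unchanged, since the coordinate subspace then has dimension $0$ or $n-1$, respectively.
\end{enumerate}
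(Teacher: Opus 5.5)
Your proof is correct. It rests on the same dimension count as the paper's, but you carry it out differently. The paper encodes the zero block as $E_I^T (X^{-1}MY^{-*}) E_J = 0$ with column-selection matrices $E_I, E_J$. It then applies Sylvester's rank inequality twice to get $\rank M \le 2n-|I|-|J| = n-1$ in one line. You instead observe directly that the $|J|$ columns of $N = X^{-1}MY^{-*}$ indexed by $J$ lie in an $(|J|-1)$-dimensional coordinate subspace, and so are linearly dependent.

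Your route is more elementary and constructive. It produces an explicit kernel vector, showing that every $M \in S$ annihilates some nonzero vector in the fixed $|J|$-dimensional subspace $Y^{-*}\,\mathrm{span}\{e_j\}_{j\in J}$. That is slightly more structural information than singularity alone. The paper's formulation immediately gives the general bound $\rank M \le 2n-|I|-|J|$ for arbitrary $I,J$. This is the form that pairs with Proposition~\ref{prop:singular_are_block}, where $|I|+|J| = 2n-r$. Your argument reaches the same bound if you add the at most $n-|J|$ dimensions contributed by the remaining columns.

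One minor terminology point: $M\mapsto X^{-1}MY^{-*}$ is an equivalence transformation, not a congruence, since $X$ and $Y$ are independent. This does not affect the argument.
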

\begin{proof}
    Let $E_I \in \F^{n \times |I|}$ be a matrix $[e_{f(1)} \ \cdots \ e_{f(|I|)}]$, where $f$ is a bijection between $\{1, 2, \dots, |I| \}$ and $I$. In other words, the columns of $E_I$ are the standard basis vectors given by the index set $I$. Let us define the matrix $E_J$ similarly. Then, $E_I^T (X^{-1}M Y^{-*}) E_J = 0$. It follows from standard rank inequalities that $$0 = \mbox{rank} \ E_I^T (X^{-1}M Y^{-*}) E_J \geq \mbox{rank} \ E_I^T + \mbox{rank} \ (X^{-1}M Y^{-*}) +  \mbox{rank} \ E_J - 2n.$$ Rearranging the terms and noting that $|I| + |J| = n+1$ yields $n-1 \geq \mbox{rank} \ M$.
\end{proof}
The reverse direction of Proposition \ref{prop:blocks_are_singular} holds for vector spaces with bases that can can be transformed into a subset of the standard basis via a rank-preserving linear transformation. This is a corollary of a more general result stated in Proposition \ref{prop:singular_are_block}. Before stating the proposition, we introduce a useful lemma.
\begin{lemma} \label{lemma:rankminors}
    Let $X, Y \in \mbox{GL}(n, \F)$ and let $X_i$ and $Y_i$ denote the $i$th column of the matrices $X$ and $Y$, respectively. Let $S$ be a singular vector space with a basis $\mathcal{B} \subset \{X_i Y_j^* \}_{i,j}$. Then, $$\max_{M \in S} \rank M = \min \{k \in \Z : |\{X_i Y_{\sigma(i)}^*\}_i \cap \mathcal{B}| \leq k \ \forall \sigma \in \mbox{Sym}(n)\}.$$ 
\end{lemma}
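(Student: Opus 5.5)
Since $\mathcal{B}$ is a subset of the transformed standard basis, we reduce to the case $X=Y=I$ via the rank-preserving map $M \mapsto X^{-1} M Y^{-*}$. This map sends $X_iY_j^*$ to $e_{i,j}$, preserves ranks, and preserves the permutation-matching count on the right-hand side. So assume $\mathcal{B}=\{e_{i,j}: (i,j)\in P\}$ for some pattern $P \subset \{1,\dots,n\}^2$. Then $S$ is exactly the set of matrices supported on $P$. Interpret $P$ as the edge set of a bipartite graph $G$ with rows on one side and columns on the other. A permutation $\sigma$ corresponds to a perfect matching of $K_{n,n}$, and $|\{e_{i,\sigma(i)}\}\cap\mathcal{B}|$ counts its edges that lie in $P$. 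Hence the right-hand side equals $\nu(G)$, the maximum matching size of $G$: any matching of $G$ extends to a permutation, and conversely each permutation restricted to $P$ is a matching. So the claim to prove is the classical statement that the generic rank of a pattern matrix equals the maximum matching size of its bipartite graph.

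For the upper bound $\rank M \le \nu(G)$ for every $M\in S$, use König's theorem: $G$ has a vertex cover consisting of $r$ rows $R$ and $c$ columns $C$ with $r+c=\nu(G)$. Every nonzero entry of $M$ lies in a row of $R$ or a column of $C$. Hence $M = M_1+M_2$, where $M_1$ is supported on the rows $R$ (so $\rank M_1 \le r$) and $M_2$ is supported on the columns $C$ (so $\rank M_2\le c$). Subadditivity of rank gives $\rank M\le \nu(G)$. Alternatively, without König: a $k\times k$ minor of $M$ is a signed sum over bijections between the chosen rows and columns, and a nonzero term needs a matching of size $k$ in $G$. So every minor of order greater than $\nu(G)$ vanishes identically, which is perhaps cleaner and avoids König entirely.

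For the lower bound, take a maximum matching $\{(i_1,j_1),\dots,(i_k,j_k)\}$ with $k=\nu(G)$, and set $M=\sum_{\ell} e_{i_\ell,j_\ell}\in S$. The submatrix on rows $\{i_\ell\}$ and columns $\{j_\ell\}$ is a permutation matrix, so $\rank M\ge k$; in fact $M$ is a partial permutation matrix of rank exactly $k$. Therefore the maximum over $S$ equals $k$, which is the minimal integer bounding all permutation intersection counts.

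The main obstacle is the bookkeeping in the reduction and the identification with matchings. One must check that $X^{-1}(X_iY_j^*)Y^{-*}=e_{i,j}$, which holds since $X^{-1}X_i=e_i$ and $Y_j^*Y^{-*}=(Y^{-1}Y_j)^*=e_j^*$. One must also check that the minimum over $k$ on the right-hand side is attained exactly at the maximal intersection, since $\{k: \text{count}\le k\ \forall\sigma\}$ has least element $\max_\sigma$ count. The hypothesis that $S$ is singular plays no role beyond guaranteeing that this value is at most $n-1$.
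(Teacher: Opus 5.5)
Your proof is correct and shares the paper's skeleton. You use the same reduction $M\mapsto X^{-1}MY^{-*}$ to a space spanned by standard basis matrices, bound the rank above by the maximal permutation count (the paper's $r$, your $\nu(G)$), and show that this bound is attained. Your minor-based upper bound is exactly the paper's argument.

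The lower bound is where you improve on the paper. The paper argues by contradiction that if $\max_{M\in\widetilde S}\rank M<r$, then the relevant minors would vanish identically on $\widetilde S$, ``which would imply that $r$ is not the smallest number,'' without spelling out why. Your partial permutation matrix $\sum_\ell e_{i_\ell,j_\ell}\in\widetilde S$ has minor $\pm1$ on the matched rows and columns. It is precisely the witness that justifies that step, and it makes the argument direct.

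Your K\"onig route for the upper bound is genuinely different, and it buys more than the lemma. Take a minimum vertex cover $R\cup C$ and let $I=\{1,\dots,n\}\setminus R$, $J=\{1,\dots,n\}\setminus C$. Then $\widetilde M_{I,J}=0$ for all $\widetilde M\in\widetilde S$, with $|I|+|J|=2n-\nu(G)=2n-r$. This is exactly the conclusion of Proposition~\ref{prop:singular_are_block}. The paper obtains that proposition separately through an iterative permutation-swapping argument, whereas in your framework it is an immediate corollary of the K\"onig--Egerv\'ary theorem.

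Finally, your remark that singularity of $S$ is never used is correct and relevant. The paper applies this lemma in Proposition~\ref{prop:singular_are_block} to an arbitrary vector space with such a basis, which is only justified because singularity plays no role.
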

\begin{proof}
    We start by noting that the statement is equivalent with $$\max_{M \in X^{-1} S Y^{-*}} \rank M = \min \{k \in \Z : |\{e_{i,\sigma(i)}\}_i \cap X^{-1}\mathcal{B}Y^{-*}| \leq k \ \forall \sigma \in \mbox{Sym}(n)\}.$$ Here, $X^{-1}\mathcal{B}Y^{-*}$ is a basis of $X^{-1}S Y^{-*}$, and is also a subset of the standard basis. For brevity, let us define $\widetilde{\mathcal{B}}:= X^{-1}\mathcal{B}Y^{-*}$, $\widetilde{{S}}:= X^{-1}{S}Y^{-*}$ and $$r:=\min \{k \in \Z : |\{e_{i,\sigma(i)}\}_i \cap X^{-1}\mathcal{B}Y^{-*}| \leq k \ \forall \sigma \in \mbox{Sym}(n)\}.$$ Let $M \in \widetilde{S}$ and let $a_{ij}$ denote the coefficients of $M$ in the standard basis, that is, $M = \sum a_{ij} e_{i,j}$. Note that $e_{i,j} \notin \widetilde{\mathcal{B}}$ implies that $a_{ij}=0$. With this observation, it is clear that minors of $M$ of higher order than $r$ need to vanish. Hence, $\rank M \leq r \ \forall M \in \widetilde{S}$, and hence $\max_{M \in \widetilde{S}} \rank M \leq r$. To reach the inequality in the other direction, consider that if $\max_{M \in \widetilde{S}} \rank M < r$, then all $r-1$ minors of all elements of $\widetilde{S}$ would vanish, which would imply that $r$ is not the smallest number satisfying the constraint, which is a contradiction.
\end{proof}
\begin{proposition} \label{prop:singular_are_block}
    Let $X, Y \in \mbox{GL}(n, \F)$ and let $X_i$ and $Y_i$ denote the $i$th column of the matrices $X$ and $Y$, respectively. Let $S$ be a vector space with a basis $\mathcal{B} \subset \{X_i Y_j^* \}_{i,j}$ such that $\max_{M \in S} \rank M = r$. Then, there exist index sets $I$ and $J$ with $|I| + |J| = 2n-r$ such that $M \in S$ implies that $(X^{-1}M Y^{-*})_{I,J}=0$.
\end{proposition}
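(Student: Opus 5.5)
Reduce to the standard basis, then invoke K\"onig's theorem on the bipartite graph of the support pattern. Set $\widetilde S := X^{-1} S Y^{-*}$ and $\widetilde{\mathcal B} := X^{-1}\mathcal B Y^{-*}$. Since $X^{-1}X_iY_j^*Y^{-*} = e_{i,j}$, the set $\widetilde{\mathcal B}$ is a basis of $\widetilde S$ consisting of standard basis matrices. The map $M\mapsto X^{-1}MY^{-*}$ preserves rank, so $\max_{M\in\widetilde S}\rank M = r$. It therefore suffices to find $I,J$ with $|I|+|J|=2n-r$ such that $M_{I,J}=0$ for every $M\in\widetilde S$.

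Next, encode the support pattern combinatorially. Let $G$ be the bipartite graph with row vertices $\{1,\dots,n\}$, column vertices $\{1,\dots,n\}$, and an edge $(i,j)$ whenever $e_{i,j}\in\widetilde{\mathcal B}$. Every $M\in\widetilde S$ has $a_{ij}=0$ whenever $(i,j)$ is not an edge. By Lemma \ref{lemma:rankminors}, $r$ equals the largest possible value of $|\{e_{i,\sigma(i)}\}_i\cap\widetilde{\mathcal B}|$ over permutations $\sigma$. Any matching in $G$ extends to a permutation, and the edges of $G$ used by a permutation form a matching, so $r$ is exactly the maximum matching size $\nu(G)$.

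The key step is K\"onig's theorem for bipartite graphs: there is a vertex cover of size $\nu(G)=r$. Write it as $R\cup C$, with $R$ a set of rows and $C$ a set of columns, so $|R|+|C|=r$ and every edge $(i,j)$ has $i\in R$ or $j\in C$. Take $I:=\{1,\dots,n\}\setminus R$ and $J:=\{1,\dots,n\}\setminus C$. Then $|I|+|J|=2n-r$. If $i\in I$ and $j\in J$, then $(i,j)$ is not covered, so it is not an edge, and hence $a_{ij}=0$ for all $M\in\widetilde S$. This gives $(X^{-1}MY^{-*})_{I,J}=M'_{I,J}=0$, where $M'$ is the transformed matrix.

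The main obstacle is justifying $r=\nu(G)$. Lemma \ref{lemma:rankminors} is stated for singular spaces, but its argument uses only the generic-rank/minor reasoning and so applies to any such $S$. If one does not want to rely on it, the claim can be checked directly. For the upper bound, any nonzero $k\times k$ minor requires a nonzero term in the Leibniz expansion, and such a term is supported on a matching of size $k$. For the lower bound, choose the coefficients generically, for example set $a_{ij}=1$ on the edges of a maximum matching and $0$ elsewhere; this produces a nonzero $\nu\times\nu$ minor. K\"onig's theorem itself can be cited as a classical result, or equivalently one can cite the Frobenius--K\"onig theorem on zero submatrices.
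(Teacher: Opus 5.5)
Your proof is correct, and it shares the paper's outer structure. Both pass to $\widetilde S=X^{-1}SY^{-*}$, whose basis consists of matrices $e_{i,j}$. Both use Lemma~\ref{lemma:rankminors} to identify $r$ with the maximum number of basis elements hit by one permutation, i.e.\ your matching number $\nu(G)$.

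Where you differ is the combinatorial step. You cite K\"onig's theorem (equivalently Frobenius--K\"onig) and take $I,J$ as the complements of a minimum vertex cover. The paper instead proves this step by hand. It moves a permutation attaining $r$ onto the diagonal and observes that the trailing $(n-r)\times(n-r)$ block vanishes. Then, as long as some row $k\le r-t$ has an entry in a column $>r-t$, it swaps $k$ into position $r-t$. It argues that column $r-t$ must then vanish below row $r$, since otherwise some permutation would hit $r+1$ basis elements. This is an augmenting-path proof of K\"onig's theorem in disguise; its advantage is that it is self-contained and builds $I,J$ explicitly from a maximizing permutation.

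Your route is shorter and avoids a weakness of the written argument. The paper writes out only augmenting paths of length three. Already in the second step its permutation ($\sigma(r-1)=l_2$, $\sigma(r)=r$, $\sigma(m)=r-1$) is not injective when $l_2=r$. In that case one has to reroute through the entry $(r,l)$ found in the first step, which is exactly the general alternating-path bookkeeping that K\"onig's theorem packages.

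Your direct check of $r=\nu(G)$ is also cleaner than the lower-bound step in the paper's proof of Lemma~\ref{lemma:rankminors}: you use the Leibniz expansion for the upper bound and the partial permutation matrix supported on a maximum matching for the lower bound. You are also right to flag that this lemma is stated only for singular spaces, while the proposition does not assume $S$ singular.
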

\begin{proof}
We start by noting that $X^{-1}\mathcal{B}Y^{-*}$ is a subset of the standard basis. Further, note that the statement ``$M \in S$ implies that $(X^{-1}M Y^{-*})_{I,J}=0$" is equivalent with ``$\{e_{i,j}\}_{i\in I, j \in J} \cap X^{-1}\mathcal{B}Y^{-*} = \emptyset$''. 

Similarly to the proof of Lemma \ref{lemma:rankminors}, let us define $\widetilde{\mathcal{B}}_1:= X^{-1}\mathcal{B}Y^{-*}$ and $\widetilde{S}:= X^{-1}S Y^{-*}$. By Lemma \ref{lemma:rankminors}, there exists $\sigma_* \in \mbox{Sym}(n)$ such that $|\{e_{i,\sigma_*(i)}\} \cap \widetilde{\mathcal{B}}_1| = r.$ Hence, there exist permutation matrices $P_a$ and $P_b$ such that $\{e_{i,i}\}_{i\leq r} \subset P_a \widetilde{\mathcal{B}}_1 P_b = \widetilde{\mathcal{B}}_2 $. We also have that $\{e_{i,j}\}_{r<i\leq n, r<j\leq n} \cap \widetilde{\mathcal{B}}_2 = \emptyset$, as if not, there would exist $\sigma \in \mbox{Sym}(n)$ such that $|\{e_{i,\sigma(i)}\} \cap \widetilde{\mathcal{B}}_1| > r$, which contradicts the rank assumption of the proposition by Lemma \ref{lemma:rankminors}.   

Now, if $\{e_{i,j}\}_{1<i\leq r,r<j\leq n} \cap \widetilde{\mathcal{B}}_2 = \emptyset$, the statement of the proposition holds. Otherwise, let $e_{k,l}$ belong to the intersection. Let $\sigma_r = (k \ r) \in \mbox{Sym}(n)$, and let $P_r$ denote the corresponding permutation matrix. Let $\widetilde{\mathcal{B}}_3 = P_r \widetilde{\mathcal{B}}_2 P_r$ denote the corresponding permuted basis. It follows that $\{e_{i,r}\}_{r<i\leq n} \cap \widetilde{\mathcal{B}}_3 = \emptyset$, because otherwise, we could construct $\sigma \in \mbox{Sym}(n)$ such that $|\{e_{i,\sigma(i)}\} \cap \widetilde{\mathcal{B}}_3| > r$ (with $\sigma(i) = i$ for $1 \leq i < r,$ $\sigma(r) = l$ and $\sigma(m) = r$, where $m$ is the index such that $e_{m,r}$ belongs to the intersection).

Now, we have established that $\{e_{i,j}\}_{r<i\leq n, r-1<j\leq n} \cap \widetilde{\mathcal{B}} = \emptyset$. We can continue in the same manner: if $\{e_{i,j}\}_{1<i\leq r-1,r-1<j\leq n} \cap \widetilde{\mathcal{B}} = \emptyset$, the statement of the proposition holds. Otherwise, let $e_{k_2,l_2}$ belong to the intersesction, and take $\sigma_{r-1} = (k_2 \ r-1) \in \mbox{Sym}(n)$, and let $P_{r-1}$ denote the corresponding permutation matrix, and let $\widetilde{\mathcal{B}}_4 = P_{r-1} \widetilde{\mathcal{B}}_3 P_{r-1}$. It follows that $\{e_{i,r-1}\}_{r<i\leq n} \cap \widetilde{\mathcal{B}}_4 = \emptyset$, because otherwise, we could construct $\sigma \in \mbox{Sym}(n)$ that contradicts the rank assumption (with $\sigma(i) = i$ for $1 \leq i < r-1,$ $\sigma(r-1) = l_2, \sigma(r)=r$ and $\sigma(m) = r-1$, where $m$ is the index such that $e_{m,r-1}$ belongs to the intersection). 

We continue in this way until either $\{e_{i,j}\}_{1<i\leq r-t,r-t<j\leq n} \cap \widetilde{\mathcal{B}} = \emptyset$ is true at iteration $t$, at which point we have also established that $\{e_{i,j}\}_{r<i\leq n, r-t<j\leq n} \cap \widetilde{\mathcal{B}} = \emptyset$, and so the statement of the proposition holds, or we stop at iteration $t=r$, at which point we have established that $\{e_{i,j}\}_{r<i\leq n, 1\leq j\leq n} \cap \widetilde{\mathcal{B}} = \emptyset$, and so the statement of the proposition holds.
\end{proof}
We note that the singular vector space of Example \ref{ex:not_same_kernel} is a special case Proposition \ref{prop:singular_are_block} with $n=3, r=2$ and $|I| = |J| = 2$. Moreover, we note that choices with $r=n-1$, $|I|=2, |J|=n-1$ (or equivalently $|I|=n-1, |J|=2$) yield $n^2-2n+2$ dimensional singular vector spaces with no shared nonzero vector in the left kernel or the right kernel. In light of Proposition \ref{prop:dim2}, these spaces have maximal dimension among singular vector spaces that are not mere subsets of spaces of form $\mathcal{S}_v$ or $\mathcal{S}^*_v$ of Example \ref{ex:same_kernel}. The spaces $\mathcal{S}_v$ and $\mathcal{S}^*_v$ themselves correspond to the cases $|I| = 1$ and $|J|=1$ of Proposition \ref{prop:singular_are_block}, respectively, with $r=n-1$. 

The property of Proposition \ref{prop:singular_are_block} does not hold in general, as demostrated by the following example.
\begin{example}
    Consider the set
    \begin{align*}
        S = \left\{ \ 
        \begin{bmatrix}
            y & x & 0\\
            0 & z & y\\
            -z & 0 & x 
        \end{bmatrix} \ : \ x,y,z \in \F \ \right\}.
    \end{align*}
    It holds that $S$ is a singular vector space for which the result in Proposition \ref{prop:singular_are_block} does not hold, that is, for all $A, B \in \mbox{GL}(3, \F)$ and for all index sets $I$ and $J$ with $|I| + |J| = 4$ there exists $M \in S$ such that $(A^{-1}MB^{-1})_{I,J} \neq 0$.
\end{example}
\begin{proof}
    Let $M(x,y,z) = M_x x + M_y y+ M_z z$, where    
    \begin{align*}
        M_x =  
        \begin{bmatrix}
            0 & 1 & 0\\
            0 & 0 & 0\\
            0 & 0 & 1 
        \end{bmatrix}, \
        M_y =  
        \begin{bmatrix}
            1 & 0 & 0\\
            0 & 0 & 1\\
            0 & 0 & 0 
        \end{bmatrix}, \
        M_z =  
        \begin{bmatrix}
            0 & 0 & 0\\
            0 & 1 & 0\\
            -1 & 0 & 0 
        \end{bmatrix} .
    \end{align*}
    First assume that the result holds for either $|I| = 3$ or $|J| = 3$. Then, there exist $A, B \in \mbox{GL}(3, \F)$ and a nonzero $v \in \F^3$ such that either $A^{-1}M(x,y,z)B^{-1} v = 0$ or $(A^{-1}M(x,y,z)B^{-1})^T v = 0 \ \forall x,y,z$. Note that each $M_x, M_y, M_z$ are rank-2, with right kernels spanned by $e_1, e_2, e_3$, respectively, and left kernels spanned by $e_2, e_3, e_1$, respectively. These vectors remain linearly independent after a change of basis, and as $v$ needs to belong to the intersection of these kernels (after a change of basis by $A$ or $B$), it follows that $v=0$, which is a contradiction.

    Let us then assume that the result holds with $|I| = 2$ and $|J| = 2$. It then follows that there exist full rank matrices $A,B \in \F^{2\times 3}$ such that $A M(x,y,z) B^T = 0 \ \forall x,y,z$. This implies that $A M_x B^T = A M_y B^T = A M_z B^T = 0$. For this to hold, it must be that $\text{im}(B^T) \subset \mbox{ker}(A M_x) \cap \mbox{ker}(A M_y) \cap \mbox{ker}(A M_z)$, which can only hold if $\mbox{ker}(A M_x) = \mbox{ker}(A M_y) = \mbox{ker}(A M_z)$, in which case $e_1, e_2, e_3 \in \mbox{ker}(A M_x)$. This is impossible as $\mbox{dim} \ \mbox{ker}(A M_x) \leq 2$.
\end{proof}

Finally, we prove a result that suggests that a method solely based on singular vector spaces might not be optimal. Although the proof is valid only for the case $\F = \R$, we conjecture that a similar result holds also in the case $\F = \C$. This motivates an alternative approach that will be described in Subsection \ref{sec:regularization}.

\begin{theorem} \label{prop:dimensions_missing}
    Let $M \in \mathcal{S}_n(\R)$. There exists an $n$-dimensional affine subset $A$ of $\R^{n\times n}$ such that $M \in A$ and $A$ intersects trivially every singular vector space $S$ with $M \in S$ and $\max \{ \rank C : C \in S\} = \rank M$.
\end{theorem}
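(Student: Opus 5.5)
We will use the singular value decomposition of $M$ to put $M$ in block form, and then build the affine set $A$ explicitly in those coordinates. Let $r=\rank M<n$ and write $M=U\,\mathrm{diag}(\Sigma,0)\,V^T$ with $U,V$ orthogonal and $\Sigma\in\R^{r\times r}$ diagonal and positive definite. Rank and the property of being a singular vector space are preserved by $N\mapsto U^TNV$. Hence we may assume $M=\begin{bmatrix}\Sigma&0\\0&0\end{bmatrix}$, and pull the construction back at the end. For any $C$ we use the conforming blocks $C_{11},C_{12},C_{21},C_{22}$.

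\textbf{Step 1: a necessary condition on admissible spaces.} Let $S$ be a singular vector space with $M\in S$ and $\max\{\rank C: C\in S\}=r$. Then $M+tC\in S$ for every $C\in S$ and every $t\in\R$, so $\rank(M+tC)\le r$. For small $t$ the block $\Sigma+tC_{11}$ is invertible. Its Schur complement $C_{22}t-t^2C_{21}(\Sigma+tC_{11})^{-1}C_{12}$ must therefore vanish for all small $t$. Expanding in powers of $t$ gives
\[
C_{22}=0 \quad\text{and}\quad C_{21}\Sigma^{-1}C_{12}=0 \qquad \text{for all } C\in S.
\]

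\textbf{Step 2: the construction.} Take $A=M+W$, where $W$ is the $n$-dimensional linear space of matrices
\[
D(u,b)=\begin{bmatrix}0&\Sigma u e_1^T\\ e_1u^T&\mathrm{diag}(b)\end{bmatrix},\qquad u\in\R^{r},\ b\in\R^{n-r}.
\]
Here $e_1\in\R^{n-r}$, which requires $n-r\ge1$; this holds because $M$ is singular. The map $(u,b)\mapsto D(u,b)$ is injective, so $\dim W=r+(n-r)=n$.

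\textbf{Step 3: trivial intersection.} Suppose $M+D(u,b)\in S$. Since $M\in S$, also $D(u,b)\in S$, so Step 1 applies to $C=D(u,b)$. The condition $C_{22}=0$ gives $\mathrm{diag}(b)=0$, so $b=0$. The condition $C_{21}\Sigma^{-1}C_{12}=0$ gives
\[
e_1u^T\Sigma^{-1}\Sigma u e_1^T=\|u\|^2e_1e_1^T=0 .
\]
This is the point where $\F=\R$ is used: over $\C$ the quantity $u^Tu$ can vanish for $u\neq0$. Hence $u=0$, and so $A\cap S=\{M\}$. Transforming back, $U\,A\,V^T$ is the required affine set, and it does not depend on $S$.

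\textbf{Main obstacle.} The delicate step is Step 1. One must justify the coefficient extraction: the Schur complement is analytic in $t$ near $0$ and vanishes identically there, so every coefficient of its power series vanishes. One must also make sure the order-two coefficient is exactly $-C_{21}\Sigma^{-1}C_{12}$, without contributions from $C_{11}$. It does, because $C_{11}$ enters only at order $t^3$ and higher. Finally, the lemma must be applied to $D=(M+D)-M\in S$, not to $M+D$ itself.
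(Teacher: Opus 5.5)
Your proof is correct. Your affine set is essentially the paper's, and the two arguments differ only in how the key verification is done.

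\textbf{The construction.} The paper takes $A=M+\mathrm{span}\{U_nV_i^T+U_iV_n^T\}_{i=1}^n$, with the singular values absorbed into $V_1,\dots,V_k$ (the paper's $k$ is your $r$). In your coordinates this is, up to transposition, your $W$. The differences are that the range is coupled to the last null index $n$ instead of $r+1$, and the null--null block is spanned by $e_ne_i^T+e_ie_n^T$ ($k<i<n$) and $e_ne_n^T$ instead of $\mathrm{diag}(b)$.

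\textbf{The verification.} The paper writes $M+\sum_i\alpha_i(U_nV_i^T+U_iV_n^T)$ as an explicit product of an $n\times(k+2)$ and a $(k+2)\times n$ factor. By a case distinction and row reduction, it shows that the rank-$\le k$ points of $A$ are exactly those with $\alpha_{k+1}=\dots=\alpha_{n-1}=0$ and $2\alpha_n=\sum_{i\le k}\alpha_i^2$. It then notes that this paraboloid contains no line through $M$, whereas $A\cap S$ is an affine space through $M$ lying inside it.

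You instead prove a lemma valid for every direction $C$ of every admissible $S$: the first-order condition $C_{22}=0$ and the second-order condition $C_{21}\Sigma^{-1}C_{12}=0$. You then build $W$ so that no nonzero element satisfies both.

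\textbf{What each route buys.} Your route isolates the two obstructions and explains why $W$ has the shape it has. It also pinpoints the use of $\F=\R$: definiteness of $u^Tu$, which the paper uses in the form $\sum_{i\le k}\beta_i^2\neq 0$. It avoids the factorization bookkeeping. The paper's computation yields more: the entire rank-deficient locus of $A$, a curved $k$-dimensional set through $M$ missed by every admissible $S$. That is the geometric picture invoked after the theorem.

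\textbf{How the two connect.} Since $C_{11}=0$ for all $C\in W$, your Schur complement is exactly $t\,\mathrm{diag}(b)-t^2\|u\|^2e_1e_1^T$. So no power-series argument is needed when the lemma is applied only to elements of $W$. Setting $t=1$ recovers the analogue of the paper's paraboloid, namely $b=\|u\|^2e_1$.
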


\begin{proof}
Let $S \subset \R^{n\times n}$ denote a vector space of singular matrices such that $M \in S$ and $k := \rank M$ is the highest rank of any element in $S$. Then, 
    \begin{align*}
        M = [U_1 \dots U_{k} ] [V_1 \dots V_{k} ]^T,
    \end{align*} 
    for some orthonormal $\{U_i\}_{1\leq i \leq k}$ and orthogonal $\{V_i\}_{1\leq i \leq k}$. Let $\{U_i\}_{k+1 \leq i \leq n}$ and \linebreak $\{V_i\}_{k+1 \leq i \leq n}$ be such that $U := [U_1 \dots U_n]$ is an orthonormal matrix and $[V_1 \dots V_n]$ is an orthogonal matrix.

    We consider the following orthogonal basis for an $n$ dimensional subspace: $\{U_n V_i^T \linebreak + U_i V_n^T \}_{i=1 \dots n}.$ It is straightforward to check that this set of vectors is orthogonal in the Frobenius inner product. Indeed, 
    \begin{align*}
        &\langle U_n V_i^T + U_i V_n^T, \  U_n V_j^T + U_j V_n^T \rangle_F
        \\ &= \tr(V_i V_j^T + \delta_{in} V_n V_j^T + \delta_{nj} V_i V_n^T + \delta_{ij} V_n V_n^2*)
        \\ &= \delta_{ij}\|V_i\|^2 + 2 \delta_{in} \delta_{nj}\|V_n\|^2 + \delta_{ij} \|V_n\|^2, 
        %&= \delta_{ij}(\|V_i\|^2 + 2 \delta_{in}\|V_n\|^2 + \|V_n\|^2),
    \end{align*}
    which is equal to zero if and only if $i \neq j$.

    Next, we will see what happens when we add an arbitrary element of this $n$-dimensional subspace to $M$:
    \begin{align*}
        &M + \sum_{i=1}^n \alpha_i (U_n V_i^T + U_i V_n^T) 
        \\ &= \left[U_1 \ \ \cdots \ \ U_{k} \ \ U_n \ \ \sum_{i=1}^n \alpha_i U_i \right] \left[V_1 \ \ \cdots \ \ V_{k} \ \ \sum_{i=1}^n {\alpha}_i V_i \ \ V_n\right]^T.
    \end{align*}
    Clearly, both matrices in the product have full rank $\min \{k+2,n\}$ if any $\alpha_{k+1}, \dots, \alpha_{n-1}$ is nonzero. Otherwise, the product becomes
    \begin{align*}
        &\left[U_1 \ \ \cdots \ \ U_{k} \ \ U_n \ \ \alpha_n U_n + \sum_{i=1}^k \alpha_i U_i \right] \left[V_1 \ \ \cdots \ \ V_{k} \ \ {\alpha}_n V_n + \sum_{i=1}^k {\alpha}_i V_i \ \ V_n\right]^T
        \\ &= \left[U_1 \ \ \cdots \ \ U_{k} \ \ U_n \right] 
        \left[V_1 + {\alpha}_1 V_n \ \ \cdots \ \ V_{k} + {\alpha}_{k} V_n \ \ 2 {\alpha}_n V_n + \sum_{i=1}^k {\alpha}_i V_i \right]^T.
    \end{align*} 
    We see that the matrix on the left has orthonormal columns and is hence full rank $k+1$.  After applying elementary row operations to the matrix on the right hand side, it becomes
    \begin{align*}
        \left[V_1 + {\alpha}_1 V_n \ \ \cdots \ \ V_{k} + {\alpha}_{k} V_n \ \ (2 {\alpha}_n - \sum_{i=1}^{k} {\alpha}^2_i) V_n \right]^T.
    \end{align*}
    As $\{V_i\}_{1\leq i \leq n}$ are orthogonal, it is clear that the matrix is rank-deficient if and only if 
    \begin{equation*}
    \begin{aligned}
        &2 {\alpha}_n - \sum_{i=1}^{k} {\alpha}^2_i = 0.
    \end{aligned}
    \end{equation*}
    In other words, an element $\widehat M$ of the $n$-dimensional affine subspace $A := M + \mbox{span}\{U_n V_i^T \linebreak + U_i V_n^T \}_{i=1 \dots n}$ satisfies $\mbox{rank}(\widehat M) \leq k$ if and only if its coefficients satisfy 
    \begin{equation}
        \begin{aligned} \label{eq:coeff_constraint}
            &2 {\alpha}_n - \sum_{i=1}^{k} {\alpha}^2_i = 0,\\
            [&\alpha_{k+1} \ \cdots \ \alpha_{n-1}] = 0.
        \end{aligned}
        \end{equation}
    On the other hand, the intersection of the affine space $A$ with any vector subspace of $\R^{n\times n}$ should be an affine subspace of $\R^{n\times n}$. The highest dimension for an affine subspace $B \subset A$ with $M \in B$ satisfying the nonlinear constraint \eqref{eq:coeff_constraint} is clearly zero. This can be seen by considering the one-dimensional affine subspace $\left\{M + t \sum_{i=1}^n \beta_i (U_n V_i^T + U_i V_n^T)\right\}_{t\in \R}$ and noting that 
    \begin{align*}
        2 {\beta}_n t - \sum_{i=1}^{k} {\beta}^2_i t^2 = 0
    \end{align*}
    has a finite amount of solutions for all $[\beta_1 \ \cdots \ \beta_k \ \beta_n] \neq 0$. As matrices in $A$ with rank less than or equal to $k$ need to satisfy \eqref{eq:coeff_constraint}, it follows that the intersection $A \cap S$ is zero dimensional.
\end{proof}

If $S$ is a singular vector space and $r:= \max \{ \rank C : C \in S\}$, it is not difficult to show that $\rank C = r$ for almost every element $C \in S$. As such, the rank assumption in Theorem \ref{prop:dimensions_missing} is not necessarily very restrictive in practice. For example, if $M$ is chosen by an algorithm that tends to pick a generic point in a high-dimensional singular vector space, we will likely have that $\rank M = n-1$, in which case the rank assumption in Theorem \ref{prop:dimensions_missing} becomes redundant.  

Although it holds that $\mathcal{S}_n$ coincides with the union of singular vector spaces (clearly, as every singular matrix belongs to one), Theorem \ref{prop:dimensions_missing} tells us that, if we restrict to singular vector spaces that share a fixed point $M$, their union fails to describe the local geometry of the set $\mathcal{S}_n$ at $M$ (as long as $M \neq 0$). This motivates the use of regularization techniques in Subsection \ref{sec:regularization}.

\section{Structured distance to singularity} \label{sec:problem}
In this section, we consider the problem of finding the structured distance to singularity. For this problem, we propose an approach that relies on the concept of singular vector spaces, and utilizes the analysis done in Section \ref{sec:singvec}. Singular vector spaces are a useful tool for this problem because it is easy to solve distance problems over linear spaces. Indeed, the solution to the problem $$\min_{B\in V} \|A-B\|,$$ where $V$ is a vector subspace, and $\|\cdot\|$ is a norm given by an inner product, is simply given by the orthogonal projection of $A$ onto the subspace $V$. As such, there exists a closed-form expression for the distance to any given singular vector space. This remains true even when the singular vector space is intersected with a linear structure, since the intersection of two linear spaces is necessarily linear. The high-level idea is then to construct a solution to the original problem from the solutions to these linearly constrained subproblems. The description of this process, in the context of our method, is made more precise later in this section.

% Good paragraph. Maybe can find some use for it
%In a broader context, solving a part of an optimization problem exactly, and numerically optimizing over the remaining part, is called variable projection. For this problem, variable projection type methods \cite{oracle, MarU13missing} often lead to an ill-behaved objective function that can be discontinuous, even at the minimizer (see \cite{oracle}). 

This section is organized as follows. First, in Subsection \ref{sec:problem_statement}, we consider the problem statement. Then, in Subsection \ref{sec:newmethod}, we devise a computational approach based on singular vector spaces. In Subsections \ref{sec:regularization} and \ref{sec:lagrangian}, we outline two distinct ways to regularize the approach of Subsection \ref{sec:newmethod}. Finally, in Subsection \ref{sec:convergence}, we discuss convergence properties of the proposed method.

\subsection{Problem statement} \label{sec:problem_statement}
Written explicitly, the problem of finding the structured distance to singularity is the constrained minimization problem
\begin{align*}
    \min_{\Delta \in \mathcal{T}} \| \Delta \| \mbox{ s.t. } A+\Delta \mbox{ is singular}, 
\end{align*} 
where $\mathcal{T} \subset \F^{n\times n}$ is the set of structured matrices of interest. This can be written more compactly as
\begin{align} \label{eq:problem}
    \min_{\Delta \in \mathcal{T} \cap (\mathcal{S}-A)} \| \Delta \|. 
\end{align} 
In this article, the norm $\| \cdot \|$ denotes the Frobenius norm $\| \cdot \|_F$. Moreover, we focus on linear structures, that is, we require that $\mathcal{T}$ is a vector space. In practice, it is often the case that $A \in \mathcal{T}$, and we make this assumption throughout this paper for simplicity of exposition. In principle, the proposed method could be extended to problems with $A \not \in \mathcal{T}$ as well.

% REMOVE THIS?
% We first note that finding the minimizer for the minimization problem \eqref{eq:problem} would be easy if the constraint were an affine set: in this case, the minimizer would simply be the orthogonal projection of the zero matrix to the feasible set. However, the constraint in \eqref{eq:problem} is nonlinear due to the nonlinearity of the set $\mathcal{S}$. 

% -----------------------------
% The placement of this paragraph is weird. The phrasing is weird here in general. This  subsection also explains overview for our approach.
% What is left to do is to construct a solution to the original problem from these solutions. In a regularized formulation, we can find, for a fixed perturbation $\Delta$, a subspace that minimizes the objective. We use this as a more intelligent way to find the optimal $\Delta_*$.
% For the numerical solution of \eqref{eq:problem}, the approach that we take in this paper attempts to exploit the high-dimensional linear structure of the set $\mathcal{S}$. The general idea is to first exctract linear spaces from $\mathcal{S}$ that approximate the set well, and then solve the problem over these spaces, which can be done cheaply and exactly. A regularized formulation allows for solving the reverse problem: for a fixed perturbation $\Delta$, find a subspace that minimizes the objective. Combining these two processes together leads to an efficient algorithm that will be explained in Subsection \ref{sec:regularization}. Before that, in Subsection \ref{sec:newmethod}, we look at the vanilla, non-regularized approach to the problem. 

\subsection{New method} \label{sec:newmethod}
%For the numerical solution of \eqref{eq:problem}, the general idea is to restrict the set $\mathcal{S}$ to one of its linear subsets, which lets us solve the problem cheaply and exactly. A regularized formulation allows for solving the reverse problem: for a fixed perturbation $\Delta$, find a subspace that minimizes the objective. Combining these two processes together leads to an efficient algorithm that will be explained in Subsection \ref{sec:regularization}. Before that, in this subsection, we look at the vanilla, non-regularized approach to the problem. 

% In the regularized case, out method becomes an alternating minimization type approach, as opposed to a variable projection approach.

Let us begin the discussion of the new method by observing that if $\mathcal{S} = \bigcup_{i \in I}{S}_i$ for some index set $I$ and for some sets ${S}_i$, then
\begin{align} \label{eq:problem_split}
    \min_{\Delta \in \mathcal{T} \cap (\mathcal{S}-A)} \| \Delta \|_F = \min_{i \in I} \min_{\Delta \in \mathcal{T} \cap ({S}_i-A)} \| \Delta \|_F.  
\end{align} 
If the sets ${S}_i$ are linear, the sets $\mathcal{T} \cap ({S}_i-A)$ are affine and the innermost minimization problem has a closed-form solution formula given by an orthogonal projection. The idea then is to define a function $f : \{S_i\}_{i \in I} \rightarrow \R$ such that 
\begin{align} \label{eq:obj_fun}
    f(S_i)= \min_{\Delta \in \mathcal{T} \cap (S_i-A)} \| \Delta \|_F,
\end{align}
and what is left to do is to solve $\min_{i \in I} f(S_i)$. %This type of reformulation can decrease the dimension of the problem significantly. 
Recently, the approaches in \cite{oracle} and \cite{MarU13missing} used this type of reformulation of the problem, and the authors of \cite{bora} used a similar idea for the polynomial variant of the problem. These approaches use the fact that a matrix (resp. polynomial matrix) is singular if and only if there is a non-zero vector (resp. polynomial vector) in the kernel. In the scalar case, this implies that $\mathcal{S} = \bigcup_{v \in \F^n \backslash \{0\} }\mathcal{S}_v$, where $\mathcal{S}_v := \{A \in \F^{n \times n} : A v = 0 \}$. The authors of \cite{oracle} and \cite{MarU13missing} then set $f(v):= \min_{\Delta \in \mathcal{T} \cap (\mathcal{S}_v-A)} \| \Delta \|_F$, and solve $\min_{\|v\|=1} f(v)$ via a first or second order numerical minimization scheme. %As such, these approaches utilize the linear structure in $\mathcal{S}_v$.% but do not attempt to make use of other singular vector spaces. 

In the method that we propose in this section, we extend the domain of $f$ in \eqref{eq:obj_fun} from singular vector spaces of form $\mathcal{S}_v$ to the set of all singular vector spaces $\mathfrak{S}_n(\F)$, that is,
\begin{align}
\mathfrak{S}_n(\F) := \{V \subset \mathcal{S}_n(\F): V \mbox{ is a vector space over } \F \}.    
\end{align}
%This way, our framework allows for utilizing different types of singular vector spaces. 
The set $\mathfrak{S}_n(\F)$ does not have a smooth structure of a Riemannian manifold, and as such, we cannot directly rely on local curvature of the function $f$ (such as the gradient or the Hessian) when computing $\min_{S \in \mathfrak{S}_n(\F)} f(S)$. Instead, we find a cheap way to generate a sequence of points $(S_{i})_{i \in \Z^+}$ with $\{S_{i}\}_{i \in \Z^+} \subset \mathfrak{S}_n$ such that $f(S_{i+1})\leq f(S_{i})$. In the regularized formulation, which we will detail later in Subsection \ref{sec:regularization}, we will see that these generated points $S_{i}$ have an additional desired property of globally minimizing the function along certain search directions. 

\begin{remark} \label{prop:compact}
    The set $\mathfrak{S}_n(\F)$ is compact in a natural way. Clearly, $\mathfrak{S}_n(\F) \subset \bigsqcup_{k=1}^{n^2} \mbox{Gr}_k(\F^{n \times n})$, where $\mbox{Gr}_k$ denotes the Grassmann manifold of $k$-dimensional linear subspaces. Let $d_k$ denote the standard metric on $\mbox{Gr}_k$. We can equip the set $\bigsqcup_{k=1}^{n^2} \mbox{Gr}_k(\F^{n \times n})$ with the metric $d$ defined as $d(x,y) = d_k(x,y)$, if $x,y \in \mbox{Gr}_k(\F^{n \times n})$, and $d(x,y) = 1$ otherwise. As each $\mbox{Gr}_k$ is compact, it holds that their disjoint union $\bigsqcup_{k=1}^{n^2} \mbox{Gr}_k(\F^{n \times n})$ is compact with respect to the metric $d$, and hence the set $\mathfrak{S}_n(\F)$ is bounded. For closedness, we note that any convergent sequence in $\mbox{Gr}_k$ has a convergent sequence of representatives in the Stiefel manifold $V_k(\F^{n^2})$. The set $\{ [B_1 \ \cdots \ B_k ] \in \F^{n \times n^2} : \ \det(\sum_{i=1}^k B_i x_i) \equiv 0\}$ is Zariski closed, and hence its intersection with $V_k(\F^{n^2})$ is closed. It follows that $\mathfrak{S}_n(\F)$ contains its limit points, and is hence closed.
\end{remark}

It is clear that $\mathcal{S} = \bigcup_{S \in \mathfrak{S}_n}{S}$, and we can split the minimization problem into two parts, similar to \eqref{eq:problem_split}. Moreover, the resulting subproblem \eqref{eq:obj_fun} is convex, and the minimizer is clearly $\Delta_S = \mbox{Proj}_{\mathcal{T} \cap S}(A)-A$, where $\mbox{Proj}_{\mathcal{T} \cap S}$ denotes the orthogonal projection to the space $\mathcal{T} \cap S$. Then, if it holds that $A + \Delta_S \in T \in \mathfrak{S}_n$ for some $T \neq S$, it follows that $f(T)\leq f(S)$. As such, one approach for attempting to minimize the function $f$ is generating a sequence of singular subspaces $(S_{i})_{i \in \Z^+}$ such that $A + \Delta_{S_i} \in S_{i+1}$. This approach is outlined in Algorithm \ref{alg:alg1}.

\begin{algorithm}
        \caption{Structured distance to singularity}
        \label{alg:alg1}
        \begin{algorithmic}
        \STATE{\textbf{Input:} tolerance $\mbox{tol}$}
        \STATE{Define $\Delta_{1} := -A$,  $\Delta_{0} := \infty I$, $i:=1$}
        \WHILE{$ \|\Delta_{i} - \Delta_{{i-1}} \|> \mbox{tol}$}
        \STATE{Update $i := i+1$}
        \STATE{Find $S_i \in \mathfrak{S}_n$ s.t. $\Delta_{{i-1}} \in S_{i} - A$}
        \STATE{Define $\Delta_{{i}} := \arg\min_{\Delta \in \mathcal{T} \cap ({S}_{i}-A)} \| \Delta \|_F$}
        \ENDWHILE
        \RETURN $\Delta_i$
        \end{algorithmic}
\end{algorithm}

\begin{proposition}
Let $(\Delta_i)_{i\in\Z^+}$ be the sequence of solutions as defined in Algorithm \ref{alg:alg1}. The sequence $(\|\Delta_i\|_F)_{i\in\Z^+}$ converges.    
\end{proposition}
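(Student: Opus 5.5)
The plan is to show that $(\|\Delta_i\|_F)_{i\geq 2}$ is monotonically non-increasing and bounded below by zero; the monotone convergence theorem then gives convergence. Lower boundedness is immediate, since norms are non-negative. The index $i=1$, where $\Delta_1=-A$ is a finite matrix, is harmless. The symbol $\Delta_0=\infty I$ only enters the stopping test and is not part of the sequence of norms under consideration.

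The key step is the inequality $\|\Delta_{i}\|_F \leq \|\Delta_{i-1}\|_F$ for every $i\geq 2$. By construction, $S_i\in\mathfrak{S}_n$ is chosen so that $\Delta_{i-1}\in S_i - A$, that is, $A+\Delta_{i-1}\in S_i$.

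We also need $\Delta_{i-1}\in\mathcal{T}$. For $i=2$, we have $\Delta_1=-A\in\mathcal{T}$ by the standing assumption $A\in\mathcal{T}$. For $i\geq 3$, $\Delta_{i-1}$ is by definition the minimizer over $\mathcal{T}\cap(S_{i-1}-A)$, so it lies in $\mathcal{T}$.

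Hence $\Delta_{i-1}\in\mathcal{T}\cap(S_i-A)$, so $\Delta_{i-1}$ is feasible for the problem defining $\Delta_i$. It follows that
\begin{equation*}
\|\Delta_i\|_F=\min_{\Delta\in\mathcal{T}\cap(S_i-A)}\|\Delta\|_F\leq\|\Delta_{i-1}\|_F .
\end{equation*}

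Two well-definedness points remain. First, the minimum exists and is unique. Because $A\in\mathcal{T}$, the set $\mathcal{T}\cap(S_i-A)$ equals $(\mathcal{T}\cap S_i)-A$, a nonempty affine subspace. The minimum-norm element is therefore $\mathrm{Proj}_{\mathcal{T}\cap S_i}(A)-A$, as noted earlier in the paper. Second, a suitable $S_i$ always exists. $A+\Delta_{i-1}$ is singular: for $i=2$ it is the zero matrix, and for $i\geq 3$ it lies in $S_{i-1}$. Every singular matrix belongs to some singular vector space, for instance $\mathcal{S}_v$ with $v$ in its kernel (Example~\ref{ex:same_kernel}).

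I expect the main subtlety to be exactly this feasibility bookkeeping, namely checking that $\Delta_{i-1}$ lies in $\mathcal{T}$ and not merely in $S_i-A$. This is where the assumption $A\in\mathcal{T}$ is used. Once that is settled, convergence follows directly from the monotone convergence theorem.
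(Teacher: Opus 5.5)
Your proof is correct and follows the paper's route. The paper's argument is also that the previous iterate $\Delta_{i-1}$ is feasible for the subproblem defining $\Delta_i$, so the norms are non-increasing, bounded below by zero, and hence convergent. Your extra bookkeeping makes explicit what the paper's one-line proof leaves implicit: that $\Delta_{i-1}\in\mathcal{T}$ (via $A\in\mathcal{T}$), that a suitable $S_i$ exists, and that the minimizer is well defined.
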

\begin{proof}
As $\Delta_i$ always satisfies the constraint of the minimization problem associated with $\Delta_{i+1}$, it must hold that $\|\Delta_{i+1}\|_F \leq \|\Delta_{i}\|_F$. Since this sequence is non-increasing and is bounded from below by zero, it must converge.
\end{proof}

\begin{remark} \label{remark:compact}
    For the sequence $(\Delta_i)_{i\in\Z^+}$ as defined in Algorithm \ref{alg:alg1}, it holds that $\|\Delta_{i}\|_F \leq \|A\|_F$ for all $i$. Hence, $(\Delta_i)_{i\in\Z^+}$ is a sequence in a compact set.
\end{remark}
\begin{corollary}
The sequence $(\Delta_i)_{i\in\Z^+}$, as defined in Algorithm \ref{alg:alg1}, has an accumulation point $\Delta_*$ such that $A+\Delta_*$ is singular and $\|\Delta_*\|_F = \lim_{i\rightarrow \infty} \|\Delta_i\|_F$. 
\end{corollary}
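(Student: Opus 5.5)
The plan is to combine Remark \ref{remark:compact} with the monotonicity from the preceding proposition, and then use closedness of the singular set $\mathcal{S}$.

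First, by Remark \ref{remark:compact}, every iterate satisfies $\|\Delta_i\|_F \leq \|A\|_F$. Hence $(\Delta_i)_{i\in\Z^+}$ lies in the closed ball of radius $\|A\|_F$ in the finite-dimensional space $\F^{n\times n}$, which is compact. By Bolzano--Weierstrass there is a subsequence $(\Delta_{i_k})_k$ converging to some $\Delta_*$; this $\Delta_*$ is the accumulation point we want.

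Second, the norm identity. The preceding proposition shows that $(\|\Delta_i\|_F)$ converges to some limit $L$. Every subsequence of a convergent real sequence has the same limit, so $\|\Delta_{i_k}\|_F \to L$. The Frobenius norm is continuous, so also $\|\Delta_{i_k}\|_F \to \|\Delta_*\|_F$. Therefore $\|\Delta_*\|_F = L = \lim_{i\to\infty}\|\Delta_i\|_F$.

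Third, singularity of the limit. For every $i \geq 2$ the iterate $\Delta_i$ lies in $\mathcal{T}\cap(S_i - A)$ with $S_i \in \mathfrak{S}_n$. So $A+\Delta_i \in S_i \subset \mathcal{S}$, which means $\det(A+\Delta_i)=0$. The initial iterate $\Delta_1=-A$ also gives the singular matrix $0$. The determinant is a polynomial in the entries, hence continuous, so $\mathcal{S} = \det^{-1}(0)$ is closed. Then $\det(A+\Delta_*) = \lim_k \det(A+\Delta_{i_k}) = 0$, so $A+\Delta_*$ is singular.

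I do not expect a genuine obstacle. The only point needing care is that every iterate, including the initial one, is feasible, i.e.\ $A+\Delta_i$ is singular. This follows directly from how Algorithm \ref{alg:alg1} is defined.
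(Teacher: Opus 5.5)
Your proof is correct and follows the same route as the paper, whose proof simply cites Remark~\ref{remark:compact} (boundedness of the iterates) and the closedness of the set of singular matrices. You have filled in the details the paper leaves implicit: the Bolzano--Weierstrass extraction, the norm identity obtained from the preceding proposition together with continuity of the norm, and the fact that every iterate (including $\Delta_1=-A$) makes $A+\Delta_i$ singular.
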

\begin{proof}
The result follows from the observation in Remark \ref{remark:compact} and the fact that the set of singular matrices is closed.     
\end{proof}

\begin{theorem} \label{thm:fv}
    Let $\{B^{(i)}\}_{1 \leq i \leq p}$ denote an orthonormal basis of a vector subspace $\mathcal{T} \subset \F^{n \times n}$ with respect to the Frobenius inner product ${\langle A, B \rangle = \tr (A^* B)}$.
Let ${S}_{i} \in \mathfrak{S}_n(\F)$ and $A \in \mathcal{T}$. Define
\begin{equation} \label{Mr}
    M = \begin{bmatrix}
        \mvec{(\mbox{Proj}_{S_{i}^\perp}} B^{(1)}) & \mvec{(\mbox{Proj}_{S_{i}^\perp}} B^{(2)}) & \dots & \mvec{(\mbox{Proj}_{S_{i}^\perp}} B^{(p)})
    \end{bmatrix} \in \mathbb{F}^{n^2 \times p}.
\end{equation}
Then, the solution to the minimization problem in~\eqref{eq:obj_fun} is unique and is given by 
\[
    \Delta_* = \sum_{j=1}^p B^{(j)} (\delta_*)_j, \quad  \delta_* = - M^\dagger \mvec{(\mbox{Proj}_{S_{i}^\perp} A)},
\]
where $M^\dagger$ denotes the Moore--Penrose pseudoinverse. Hence, the function $f$ is well-defined with $f(S_i) = \|M^\dagger \mvec{(\mbox{Proj}_{S_{i}^\perp} A)}\|$.
\end{theorem}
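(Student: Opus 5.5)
The plan is to reduce the problem in \eqref{eq:obj_fun} to a minimum-norm solution of a consistent linear system in the coordinates of the orthonormal basis $\{B^{(j)}\}_{1\le j\le p}$ of $\mathcal{T}$.

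First, I parametrize the feasible set. Every $\Delta \in \mathcal{T}$ can be written uniquely as $\Delta = \sum_{j=1}^p B^{(j)}\delta_j$ with $\delta \in \F^p$. Because the basis is orthonormal in the Frobenius inner product, $\|\Delta\|_F = \|\delta\|_2$, so minimizing $\|\Delta\|_F$ over $\mathcal{T}$ is the same as minimizing $\|\delta\|_2$ over $\F^p$.

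Second, I rewrite the constraint. The condition $\Delta \in S_i - A$ means $A + \Delta \in S_i$. Since $S_i$ is a linear subspace, this is equivalent to $\mbox{Proj}_{S_i^\perp}(A+\Delta) = 0$. By linearity of the orthogonal projection and of $\mvec$, this becomes
\[
\sum_{j=1}^p \mvec(\mbox{Proj}_{S_i^\perp}B^{(j)})\,\delta_j = -\mvec(\mbox{Proj}_{S_i^\perp}A),
\]
that is, $M\delta = -\mvec(\mbox{Proj}_{S_i^\perp}A)$ with $M$ as in \eqref{Mr}. Hence the problem in \eqref{eq:obj_fun} is equivalent to $\min \|\delta\|_2$ subject to $M\delta = b$, where $b := -\mvec(\mbox{Proj}_{S_i^\perp}A)$.

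The main point to check is that this system is consistent, since otherwise the pseudoinverse would only give a least-squares solution and not a feasible one. This is where the assumption $A \in \mathcal{T}$ is used. Write $A = \sum_j B^{(j)} a_j$. Since $0 \in S_i$, the choice $\Delta = -A$ is feasible, so $\delta = -a$ solves $M\delta = b$. Thus the feasible set is a nonempty affine subspace $-a + \ker M$.

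Finally, I invoke the standard fact about consistent systems. The minimum-norm solution is the unique feasible point lying in $(\ker M)^\perp = \mbox{im}(M^*)$, and it equals $M^\dagger b$. Uniqueness also follows independently, because the squared Euclidean norm is strictly convex and the feasible set is a nonempty closed affine set. Mapping back through the coordinate isometry gives the unique minimizer
\[
\Delta_* = \sum_j B^{(j)} (\delta_*)_j, \qquad \delta_* = -M^\dagger \mvec(\mbox{Proj}_{S_i^\perp}A),
\]
and therefore $f(S_i) = \|\Delta_*\|_F = \|\delta_*\|_2 = \|M^\dagger \mvec(\mbox{Proj}_{S_i^\perp}A)\|$. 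Because the minimizer exists and is unique for every $S_i \in \mathfrak{S}_n(\F)$, the function $f$ is well-defined.
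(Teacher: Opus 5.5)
Your proposal is correct and follows essentially the same route as the paper. Both arguments use the orthonormal basis to identify $\|\Delta\|_F$ with $\|\delta\|_2$, vectorize the constraint $\mbox{Proj}_{S_i^\perp}(A+\Delta)=0$ into $M\delta=-\mvec(\mbox{Proj}_{S_i^\perp}A)$, use $\Delta=-A$ (available because $A\in\mathcal{T}$) to show the system is consistent, and take the minimum-norm solution $M^\dagger b$. Your strict-convexity argument for uniqueness and the characterization via $(\ker M)^\perp=\mbox{im}(M^*)$ are small additions that the paper leaves implicit in its citation of the minimum-norm least-squares result.
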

\begin{proof}
    Note that, because of orthonormality of the basis, $\|\Delta\| = \|\delta\|$. Expressing the constraint in vectorized form yields
    \begin{align} 
        \min_{\Delta \in \mathcal{T} \cap (\mathcal{S}_i-A)} \| \Delta \|_F \quad = \quad \min_{\delta \in \F^p} \| \delta \|_F \quad \mbox{s.t.} \quad M \delta = - \mvec{(\mbox{Proj}_{S_{i}^\perp} A)}.
    \end{align}
    A solution exists because $A \in \mathcal{T}$; namely, the choice for $\delta$ corresponding to $-A$ satisfies the constraint. The minimal norm solution to an underdetermined least-squares problem can be obtained by left-multiplying the right hand side with the Moore--Penrose pseudoinverse \cite[Theorem~1.2.10]{Bjorck}.
\end{proof}
A non-trivial step in Algorithm \ref{alg:alg1} is finding the next set $S_{i}$ such that $\Delta_{{i-1}} \in S_{i} - A$. This can be achieved by utilizing singular vector spaces that admit a basis $\mathcal{B} \subset \{e_{i,j} \}_{i,j}$ up to rank-preserving linear transformations. As per Proposition \ref{prop:singular_are_block}, we can characterize these vector spaces in terms of a zero submatrix of size $|I| \times |J|$, where $|I| + |J| = n+1$. Let us express $A+\Delta_{i-1}$ in terms of its singular value decomposition $U \Sigma V^*$. As $\Sigma$ is singular, it is clear that there exist zero blocks of all possible shapes in $\Sigma$ such that the side lengths add up to $n+1$. As such, $A+\Delta_{i-1}$ is contained in the corresponding singular vector spaces, and we can use any of these for the next iteration. This procedure is stated in Procedure \ref{proc:construct}. It is implicit in Procedure \ref{proc:construct} that the choice for the set $S_i$ should be different from $S_{i-1}$, otherwise Algorithm \ref{alg:alg1} necessarily terminates.

\begin{procedure}
\caption{Constructing a singular vector space} \label{proc:construct}
\centering
\begin{tabular}{p{0.9\textwidth}}
    \hline
    \textbf{Step 1.} Express $A+\Delta_{i-1}$ in terms of its singular value decomposition $U \Sigma V^*$. \\
    \textbf{Step 2.} Choose a submatrix $\Sigma_{I,J} = 0$ such that $|I| + |J| = n+1$. \\
    \textbf{Step 3.} Set $S_i := \mbox{span}\{U_k V^*_j \}_{k \in I, j \in J}$. \\
    \hline
\end{tabular}
\end{procedure}

The result of the following proposition, Proposition \ref{prop:fast_eval}, shows how we can evaluate the solution $\delta_*$ efficiently for the singular vector space $S_i$ coming from Procedure \ref{proc:construct}. In particular, it lets us reduce the size of the system matrix $M$ of Theorem \ref{thm:fv} from $n^2 \times p$ to $|I||J| \times p$, where $|I|+|J|=n+1$. This lets us evaluate the expression for $\delta_*$ in fewer operations. We note that the matrices $X,Y$ in the statement of the proposition correspond to the matrices $U$ and $V$ in Procedure \ref{proc:construct}, respectively. In the following, we write $A_I$ to denote a matrix formed by taking the columns of the matrix $A$ given by the index set $I$. 
\begin{proposition} \label{prop:fast_eval}
Let $\delta_*$ be defined as in Theorem \ref{thm:fv}, and let $S$ be as in Proposition \ref{prop:singular_are_block} with $X,Y$ unitary and $r=n-1$ and $\dim S = n^2 - |I||J|$. It holds that 

\[
    \delta_* = - \widetilde M^\dagger \mvec{((X_I)^*A Y_J)},
\]
where
\begin{equation*}
    \widetilde M = \begin{bmatrix}
        \mvec{((X_I)^* B^{(1)} Y_J)} & \mvec{((X_I)^* B^{(2)} Y_J)} & \dots & \mvec{((X_I)^* B^{(p)} Y_J)}
    \end{bmatrix} \in \mathbb{F}^{|I||J| \times p}.
\end{equation*}

\end{proposition}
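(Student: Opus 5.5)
The plan is to identify $S$ and $S^\perp$ explicitly. Then I will show that the constraint system of Theorem \ref{thm:fv} is a unitary change of coordinates of the small system, padded with zero rows. Since $\dim S = n^2-|I||J|$ and, by Proposition \ref{prop:singular_are_block}, $(X^{-1}MY^{-*})_{I,J}=0$ for all $M\in S$, $S$ is contained in $W:=\mbox{span}\{X_kY_j^*\}_{(k,j)\notin I\times J}$. This span has exactly dimension $n^2-|I||J|$, because the matrices $X_kY_j^*$ are orthonormal in the Frobenius inner product when $X,Y$ are unitary. Hence $S=W$ and $S^\perp=\mbox{span}\{X_kY_j^*\}_{k\in I,j\in J}$.

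With this basis, the orthogonal projection has the form
\[
\mbox{Proj}_{S^\perp} C=\sum_{k\in I,j\in J}\langle X_kY_j^*,C\rangle X_kY_j^* = X_I\,(X_I^*CY_J)\,Y_J^*,
\]
using $\langle X_kY_j^*,C\rangle = X_k^*CY_j$. Vectorizing gives $\mvec(\mbox{Proj}_{S^\perp}C)=Q\,\mvec(X_I^*CY_J)$ with $Q:=\overline{Y_J}\otimes X_I\in\F^{n^2\times|I||J|}$. The columns of $Q$ are orthonormal, because $Q^*Q=(Y_J^TY_J^{-T*}\ldots)$; concretely, $(\overline{Y_J}\otimes X_I)^*(\overline{Y_J}\otimes X_I)=(Y_J^T\overline{Y_J})\otimes(X_I^*X_I)=I$. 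It follows that $M=Q\widetilde M$ and $\mvec(\mbox{Proj}_{S^\perp}A)=Q\,\mvec(X_I^*AY_J)$.

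It remains to transfer the pseudoinverse. Since $Q$ has orthonormal columns, $(Q\widetilde M)^\dagger=\widetilde M^\dagger Q^*$. To see this, verify the four Penrose conditions directly: $Q\widetilde M\widetilde M^\dagger Q^*Q\widetilde M=Q\widetilde M$, the product $Q\widetilde M\widetilde M^\dagger Q^*$ is Hermitian, and the other two conditions follow similarly from $Q^*Q=I$. Therefore
\[
\delta_*=-M^\dagger\mvec(\mbox{Proj}_{S^\perp}A)=-\widetilde M^\dagger Q^*Q\,\mvec(X_I^*AY_J)=-\widetilde M^\dagger\mvec(X_I^*AY_J).
\]
An equivalent argument avoids pseudoinverses: the constraints $M\delta=-\mvec(\mbox{Proj}_{S^\perp}A)$ and $\widetilde M\delta=-\mvec(X_I^*AY_J)$ have the same solution set because $Q$ is injective, so the minimum-norm solutions coincide.

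The main obstacle is the first step, identifying $S$ exactly with the span of the complementary basis elements. Proposition \ref{prop:singular_are_block} only gives an inclusion, and the dimension hypothesis is what upgrades it to equality. Some care is also needed with the Kronecker and vec convention, namely $\mvec(ACB)=(B^T\otimes A)\mvec C$ with conjugation coming from $Y_J^*$, but this affects only the bookkeeping.
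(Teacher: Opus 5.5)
Your proposal is correct and follows essentially the same route as the paper. The paper conjugates by the unitaries $X,Y$ so that $X^*SY$ becomes a coordinate subspace and then discards the zero rows outside $I\times J$; this is exactly your factorization $M=Q\widetilde M$ with the isometry $Q=\overline{Y_J}\otimes X_I$, since $Q^*$ consists of the $I\times J$ rows of the unitary $Y^T\otimes X^*$. Your version is more explicit in using $\dim S=n^2-|I||J|$ to get $S=\mbox{span}\{X_kY_j^*\}_{(k,j)\notin I\times J}$, which the paper relies on only implicitly when it asserts that $\mbox{Proj}_{(X^*SY)^\perp}$ acts as the identity on the $I\times J$ block and vanishes elsewhere.
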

\begin{proof}
    We have that
    \begin{align*} 
        \min_{\Delta \in \mathcal{T} \cap (\mathcal{S}_i-A)} \| \Delta \|_F = \min_{\Delta \in X^* (\mathcal{T} \cap (\mathcal{S}_i-A)) Y} \| X \Delta Y^* \|_F \quad =  \min_{\Delta \in (X^* \mathcal{T} Y) \cap (X^* \mathcal{S}_i Y-X^*AY)} \| \Delta \|_F,
    \end{align*}
    where the fact that the Frobenius norm is unitarily invariant was used in the second step. This corresponds to the problem 
        \begin{align*} 
        \quad \min_{\delta \in \F^p} \| \delta \|_F \quad \mbox{s.t.} \quad M_{XY} \delta = - \mvec{(\mbox{Proj}_{(X^* S_{i} Y)^\perp} X^* A Y)},
    \end{align*}
where
\begin{equation*}
    M_{XY} = \begin{bmatrix}
        \mvec{(\mbox{Proj}_{(X^* S_{i} Y)^\perp}} (X^* B^{(1)} Y)) & \dots & \mvec{(\mbox{Proj}_{(X^* S_{i} Y)^\perp}} (X^* B^{(p)} Y))
    \end{bmatrix}.
\end{equation*}
As it holds for all $D\in \F^{n\times n}$ that $$(\mbox{Proj}_{(X^* S_{i} Y)^\perp} D)_{i,j} = 0, \quad \forall (i,j) \notin I \times J,$$
both sides are zero for these indices and we can restrict to the subsystem corresponding to the indices $(i,j) \in I \times J$. For these, it holds that $$(\mbox{Proj}_{(X^* S_{i} Y)^\perp} D)_{I,J} = D_{I,J},$$
which implies the solution 
\[
    \delta_* = - \widetilde M^\dagger \mvec{((X^*A Y)_{I,J})}.
\]
\end{proof}
In order to avoid the computation of a full \texttt{svd} in every iteration, the formula given in Proposition \ref{prop:fast_eval} can be further altered for the cases $|I| = 1$ and $|J| = 1$.

\begin{corollary} \label{cor:fastest_eval}
If $|I| = 1$ in Proposition \ref{prop:fast_eval}, then 

\[
    \delta_* = - M_X^\dagger \mvec{((X_I)^*A)},
\]
where
\begin{equation*}
    M_X = \begin{bmatrix}
        \mvec{((X_I)^* B^{(1)})} & \mvec{((X_I)^* B^{(2)})} & \dots & \mvec{((X_I)^* B^{(p)})}
    \end{bmatrix} \in \mathbb{F}^{n \times p}.
\end{equation*}
Likewise, if $|J| = 1$, then 

\[
    \delta_* = - M_Y^\dagger \mvec{(A Y_J)},
\]
where
\begin{equation*}
    M_Y = \begin{bmatrix}
        \mvec{(B^{(1)} Y_J)} & \mvec{(B^{(2)} Y_J)} & \dots & \mvec{(B^{(p)} Y_J)}
    \end{bmatrix} \in \mathbb{F}^{n \times p}.
\end{equation*}

\end{corollary}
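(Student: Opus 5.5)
Specialise Proposition \ref{prop:fast_eval} to $|I|=1$, and handle $|J|=1$ by the symmetric argument. Since $|I|+|J|=n+1$, the case $|I|=1$ forces $|J|=n$, so $J=\{1,\dots,n\}$. Then $Y_J$ is the whole matrix $Y$, up to a permutation of its columns, and this matrix is unitary by assumption. Substituting $Y_J=Y$ (up to column order) into the formula of Proposition \ref{prop:fast_eval} gives
\[
\delta_* = -\widetilde M^\dagger \mvec((X_I)^* A Y), \qquad \widetilde M = \bigl[\mvec((X_I)^* B^{(1)} Y) \ \cdots \ \mvec((X_I)^* B^{(p)} Y)\bigr],
\]
where each $(X_I)^* B^{(k)} Y$ is a $1\times n$ row vector.

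The key step is to eliminate $Y$. Write $w_k := (X_I)^* B^{(k)}$ and $a := (X_I)^* A$; both are row vectors in $\F^{1\times n}$. Vectorising gives $\mvec(wY) = Y^T \mvec(w)$, or equivalently $\mvec(wY)=(Y^T\otimes 1)\mvec(w)$. Hence $\widetilde M = Q M_X$ with $Q := Y^T$ (possibly composed with a permutation matrix from the column ordering of $J$). Likewise, the right-hand side is $Q\,\mvec((X_I)^*A)$. The matrix $Q$ is unitary, because the transpose of a unitary matrix and a permutation matrix are both unitary.

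Next I use the identity $(QM)^\dagger = M^\dagger Q^*$ for unitary $Q$. To justify it, I would check the four Penrose conditions directly, or argue through the SVD: if $M=U\Sigma W^*$, then $QM=(QU)\Sigma W^*$. This gives
\[
\delta_* = -M_X^\dagger Q^* Q \,\mvec((X_I)^*A) = -M_X^\dagger \mvec((X_I)^*A),
\]
which is the claim.

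For $|J|=1$ we have $I=\{1,\dots,n\}$, so $X_I$ is unitary up to a column permutation. Now $(X_I)^* B^{(k)} Y_J$ is a column vector equal to $X_I^*(B^{(k)}Y_J)$, so $\widetilde M = X_I^* M_Y$, and the same unitary-cancellation argument finishes the proof.

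The only point needing care is the bookkeeping of the vec identity for row vectors, namely that $\mvec$ of a $1\times n$ matrix is its transpose, so that the unitary factor appears on the left of $M_X$. The same bookkeeping applies to the ordering of indices in $J$; any reordering only multiplies by a permutation matrix, which is unitary and cancels in the same way. I expect no real obstacle beyond this; in particular, I would not need the structure of $S$ beyond what Proposition \ref{prop:fast_eval} already supplies.
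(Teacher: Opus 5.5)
Your proof is correct, but it takes a different route from the paper's. The paper's one-line proof reruns the argument of Proposition~\ref{prop:fast_eval} with $Y=I$ (respectively $X=I$ when $|J|=1$). This choice is legitimate because, when $|I|=1$, we have $J=\{1,\dots,n\}$. The defining condition $X_I^*MY=0$ is then equivalent to $X_I^*M=0$. So $S=\mathcal{S}^*_{X_I}$ does not depend on $Y$ at all: it is spanned by $\{X_i e_j^*\}_{i\notin I,\,1\le j\le n}$, and the argument applies equally with $Y$ replaced by the identity, which yields $M_X$ directly.

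You instead keep the given unitary $Y$ and show that the reduced formula is invariant under it. You do this with $\mvec(wY)=Y^T\mvec(w)$ for row vectors $w$, together with $(QM)^\dagger=M^\dagger Q^*$ for unitary $Q$. Your checks of both identities, of the unitarity of $Y^T$, and of the harmless column permutation are all sound.

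Each route has its merits. Yours uses Proposition~\ref{prop:fast_eval} purely as a black box with the supplied $X,Y$. It therefore never has to re-verify the hypotheses for a different pair of matrices, a step the paper leaves implicit, but it costs two auxiliary identities. The paper's route is shorter and shows more plainly why only the single singular vector $X_I$ (respectively $Y_J$) enters the computation.
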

\begin{proof}
    Similar to the proof of Proposition \ref{prop:fast_eval} upon setting $Y$ to be the identity matrix in the first case, and $X$ to be the identity matrix in the second case.
\end{proof}
Corollary \ref{cor:fastest_eval} shows that, in order to compute $\delta_*$, we only need one left singular vector in the case $|I|=1$, and one right singular vector in the case $|J|=1$, both corresponding to the smallest singular value of $A+\Delta_{j-1}$ of Procedure \ref{proc:construct}. As such, in terms of computational speed, these singular vector spaces are good choices for Algorithm \ref{alg:alg1}. 

How well the solution to the subproblem \eqref{eq:obj_fun} approximates the solution to the original problem \eqref{eq:problem} naturally depends on the set $\mathcal{T} \cap ({S}_{{i}}-A)$. One approach to optimizing this approximation is by making sure that the affine set $\mathcal{T} \cap ({S}_{{i}}-A)$ has the highest possible dimension. The choice for ${S}_{{i}}$ when this is attained naturally depends on the structure $\mathcal{T}$. Moreover, Example \ref{ex:symmetry} below shows that the intersection $\mathcal{T} \cap S_i$ can coincide for two distinct sets $S_i$. This motivates why it can be beneficial to be flexible in what choices for $S_i$ are possible in the algorithm.
\begin{example} \label{ex:symmetry}
    Let $\mathcal{T} = \{M \in \F^{n \times n} : M=M^* \}$. Then, $\mathcal{T} \cap \mathcal{S}_v = \mathcal{T} \cap \mathcal{S}^*_v$ for all $v \in \F^n$. Hence, using $\mathcal{S}_v$ and $\mathcal{S}^*_v$ in successive iterates in Algorithm \ref{alg:alg1} would not lead to a meaningful approach for this choice of $\mathcal{T}$. 
\end{example}

Analyzing the optimal choice for ${S}_{{i}}$ in relation to $\mathcal{T}$ is a possible future research direction. In this paper, we employ a straightfrorwad attempt to maximize the dimension of $\mathcal{T} \cap ({S}_{{i}}-A)$ by using singular vector spaces $S_{i}$ of high dimension. Based on Theorem \ref{prop:dim}, we know that the highest possible dimension for singular vector spaces is $n^2 - n$. By Example \ref{ex:same_kernel} together with Proposition \ref{prop:singular_are_block}, we know that $\mathcal{S}_v$ and $\mathcal{S}^*_v$ are singular vector spaces of this maximal dimension $n^2 - n$, and Corollary \ref{cor:fastest_eval} gives a fast evaluation of $\delta_*$ in these cases. In light of Propositions \ref{prop:dim2} and \ref{prop:singular_are_block}, matrix subspaces with an underlying zero submatrix of size $2 \times (n-1)$ or $(n-1) \times 2$ have the highest dimension among singular vector spaces that have a different structure to the spaces $\mathcal{S}_v$ and $\mathcal{S}^*_v$. In our numerical experiments in Section \ref{sec:numerical}, we will use these two types of singular vector spaces. 

% Good paragraph. Maybe can be used somewhere, e.g. in conclusion?
% The idea is to search for the nearest singular matrix by moving along singular vector spaces. Singular vector spaces cover most of the dimensions in the set, and moving along them is computationally cheap. Note that ideally we would like the highest possible dimension for the singular vector spaces, so as to maximize how well they approximate Sing. 
 
\subsection{Tikhonov regularization} \label{sec:regularization}
In Algorithm \ref{alg:alg1}, the next singular vector space ${S}_{{i}}$ is chosen from those that include $A+\Delta_{i-1}$. Theorem \ref{prop:dimensions_missing} implies that this approach will inevitably exclude some of the local geometry of $\mathcal{S}$ at $A+\Delta_{i-1}$. As such, it can be beneficial to relax the condition $\Delta_{i} \in {S}_{{i}} - A$ with regularization techniques. The benefit of regularization techniques for matrix nearness algorithms has been observed in \cite{oracle} as well. %There, the authors noted that variable projection methods \cite{oracle, MarU13missing} sometimes lead to an ill-behaved objective function that can be discontinuous, even at the minimizer.

Relaxing the constraint $\Delta \in (S_i-A)$ in \eqref{eq:obj_fun} can be done by incorporating it in the objective function as a penalty term. That is, we want to find
\begin{align} \label{eq:regMr}
    f_\varepsilon(S_i) = \quad \min_{\Delta \in \mathcal{T}} \| \Delta \|^2_F + \frac{1}{\varepsilon} \|\mbox{Proj}_{S_{i}^\perp} (A+\Delta) \|_F^2,
\end{align}
where $\varepsilon$ is a regularization parameter. We call this formulation the \textit{Tikhonov regularized problem}. The idea is to successively solve for $\min_S f_\varepsilon(S)$ for decreasing values for $\varepsilon$, while using the minimizer $S_*$ of the previous iteration as the starting point for the next iteration. Under suitable assumptions, the sequence of these solutions converges to the solution of the original problem \eqref{eq:problem} in the limit $\varepsilon \rightarrow 0$. This statement is made more precise in Subsection \ref{sec:convergence} by using the more general theory of Riemannian augmented Lagrangian methods presented in \cite{LiuBoumal}.
\begin{proposition}
    Let $f_\varepsilon$ be as defined in \eqref{eq:regMr}. The problem  $$\min_{S \in \mathfrak{S}_n} f_\varepsilon(S)$$ is well-defined, that is, there exists a minimizer of the function $f_\varepsilon$. 
\end{proposition}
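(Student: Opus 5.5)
To show that $\min_{S \in \mathfrak{S}_n} f_\varepsilon(S)$ is attained, I would use the compactness of $\mathfrak{S}_n(\F)$ from Remark \ref{prop:compact} and argue that $f_\varepsilon$ is lower semicontinuous; in fact I expect it to be continuous on each Grassmannian piece $\mbox{Gr}_k \cap \mathfrak{S}_n$. A lower semicontinuous function on a compact metric space attains its infimum. Because the metric $d$ of Remark \ref{prop:compact} makes distinct $\mbox{Gr}_k$ pieces mutually at distance $1$, continuity only has to be checked within each $\mbox{Gr}_k$. Alternatively, one minimizes over each of the finitely many closed sets $\mathfrak{S}_n \cap \mbox{Gr}_k$ and takes the smallest of these minima.

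First I would make sure that the inner problem in \eqref{eq:regMr} is well posed for each fixed $S$. The objective $\|\Delta\|_F^2 + \varepsilon^{-1}\|\mbox{Proj}_{S^\perp}(A+\Delta)\|_F^2$ is a strictly convex quadratic in $\Delta \in \mathcal{T}$, since $\|\Delta\|_F^2$ is strictly convex. Hence it has a unique minimizer. Writing $\Delta = \sum_j B^{(j)}\delta_j$ and using $M$ as in Theorem \ref{thm:fv}, the minimizer is $\delta_\varepsilon(S) = -(\varepsilon I + M^*M)^{-1}M^*\mvec(\mbox{Proj}_{S^\perp}A)$. So $f_\varepsilon(S)$ is given by an explicit formula. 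The matrix $\varepsilon I + M^*M$ is always invertible, and this is exactly where regularization helps compared to the pseudoinverse in Theorem \ref{thm:fv}.

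Next I would prove continuity. On $\mbox{Gr}_k$, the orthogonal projector $\mbox{Proj}_{S^\perp} = I - Q Q^*$ (with $Q$ an orthonormal basis of $S$) depends continuously on $S$; indeed the standard Grassmann metric is equivalent to the distance between projectors. Therefore $M = M(S)$ and $\mbox{Proj}_{S^\perp}A$ are continuous in $S$. Since $(\varepsilon I + M^*M)^{-1}$ is continuous wherever it is defined, and here that is everywhere, $\delta_\varepsilon(S)$ and hence $f_\varepsilon(S)$ are continuous. There is a simpler alternative that avoids the formula. Write $f_\varepsilon(S) = \min_{\Delta\in\mathcal{T}, \|\Delta\|_F\le \|A\|_F} g(\Delta,S)$, where $g$ is jointly continuous. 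The restriction to the ball is harmless because $\Delta=0$ gives a value of at most $\varepsilon^{-1}\|A\|_F^2$, while any admissible $\Delta$ satisfies $\|\Delta\|_F^2 \le f_\varepsilon(S)$, so the minimizer lies in a fixed compact ball (after adjusting the radius to $\varepsilon^{-1/2}\|A\|_F$). A minimum of a jointly continuous function over a fixed compact set is continuous in the parameter.

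Finally I would combine the pieces. Each $\mathfrak{S}_n \cap \mbox{Gr}_k$ is closed in the compact space $\mbox{Gr}_k$, by the Zariski-closedness argument in Remark \ref{prop:compact}, and is nonempty for the relevant values of $k$ (for example $\mathcal{S}_v$ has dimension $n^2-n$). So $f_\varepsilon$ attains a minimum on each piece. The overall minimum is then the smallest of finitely many minima. I expect the main obstacle to be the rigor of the compactness and closedness step, that is, making sure the limit of singular vector spaces in $\mbox{Gr}_k$ is again singular. For this I would rely directly on Remark \ref{prop:compact}: limits of orthonormal bases $B_i$ with $\det(\sum B_i x_i)\equiv 0$ preserve this polynomial identity by continuity of the coefficients.
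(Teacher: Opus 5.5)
Your proposal is correct and follows the paper's route: compactness of $\mathfrak{S}_n(\F)$ from Remark~\ref{prop:compact}, continuity of $f_\varepsilon$ with respect to $d$ (the paper simply cites Berge's maximum theorem), and then the extreme value theorem. Your extra step of confining $\Delta$ to a fixed compact ball, or alternatively using the closed form with $(\varepsilon I + M^*M)^{-1}$, supplies the compactness hypothesis that the paper's appeal to Berge's theorem leaves implicit. Your original radius $\|A\|_F$ was already valid, since $\Delta=-A\in\mathcal{T}$ gives objective value $\|A\|_F^2$.
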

\begin{proof}
    The result follows from the extreme value theorem, after noting that (i) the feasible set is compact (as per Remark \ref{prop:compact}), and (ii) $f_\varepsilon$ is continuous (as per Berge's maximum theorem), where both (i) and (ii) hold with respect to the same metric $d|_{\mathfrak{S}_n(\F)}$, the restriction of the metric $d$ defined in Remark \ref{prop:compact} to the set $\mathfrak{S}_n(\F)$.
\end{proof}
We have seen that the minimization problem \eqref{eq:obj_fun} can be expressed as  
\begin{align*}
    \quad \min_{\delta \in \F^p} \| \delta \|^2_F \quad \mbox{s.t.} \quad M_S \delta = r_S,
\end{align*}
where the system matrix $M_S$ and the right-hand side $r_S$ depend on the choice for the singular vector space $S$ (as per Theorem \ref{thm:fv}, Proposition \ref{prop:fast_eval} and Corollary \ref{cor:fastest_eval}). In the regularized formulation corresponding to \eqref{eq:regMr}, we have
\begin{align} \label{eq:regMr2}
    f_\varepsilon(S) = \quad \min_{\delta \in \F^p} \| \delta \|^2 + \frac{1}{\varepsilon} \| M_S \delta - r_S \|^2.
\end{align}
The solution to this is given by 
\begin{equation} \label{deltastar}
\delta_* = M_S^*(M_S M_S^*+\varepsilon I)^{-1}r_S = (M_S^*M_S+\varepsilon I)^{-1}M_S^*r_S,
\end{equation}
with
\begin{align*}
    f_\varepsilon(S) = r_S^*(M_S M_S ^* + \varepsilon I)^{-1}r_S,
\end{align*}
see \cite[Theorem 2.9.]{oracle}.

The final question we need to address is how to update the singular vector space $S_{i}$ in this regularized formulation. Note that Procedure \ref{proc:construct} does not directly apply, since the matrix $A+\Delta_{j-1}$ is not necessarily singular. For this purpose, let us define
\begin{align} \label{eq:obj_g}
     g_\varepsilon(\delta,S) = \quad \| \delta \|^2_F + \frac{1}{\varepsilon} \|M_S \delta - r_S \|^2,
\end{align}
so that 
\begin{align} \label{eq:problem_g}
    \min_{S \in \mathfrak{S}_n} f_\varepsilon(S) = \min_{S \in \mathfrak{S}_n} \min_{\delta \in \F^{p}} g_\varepsilon(\delta, S).
\end{align}
With a small modification to Procedure \ref{proc:construct}, we can find a new subspace $S_{i}$ that minimizes $g_\varepsilon(\delta, S)$ over $S \in \mathfrak{S}_n$ for a fixed $\delta$. To see how to do this, let us again express $A+\Delta_{i-1}$ in terms of its singular value decomposition $U \Sigma V^*$. It is clear that there exists a $|I| \times |J|$ submatrix of $\Sigma$ for all possible values for $|I|,|J|$ satisfying $|I| + |J| = n+1$ that contains $\sigma_{min}$ as the only possibly non-zero element. The distance to the singular vector space defined by such a $I \times J$ submatrix (see Proposition \ref{prop:blocks_are_singular}) is clearly $\sigma_{min}$, which is also the distance to the set of singular matrices, and is hence the global minimum over all singular vector spaces. This process is outlined in Procedure \ref{proc:construct_reg}.

\begin{procedure}
\caption{Constructing $\argmin_{S \in \mathfrak{S}_n} g_\varepsilon(\delta_{i-1}, S)$} \label{proc:construct_reg}
\centering
\begin{tabular}{p{0.9\textwidth}}
    \hline
    \textbf{Step 1.} Express $A+\Delta_{i-1}$ in terms of its singular value decomposition $U \Sigma V^*$. \\
    \textbf{Step 2.} Choose a submatrix $\Sigma_{I,J}$ such that $|I| + |J| = n+1$ and $\|\Sigma_{I,J}\|_F = \sigma_{min}$. \\
    \textbf{Step 3.} Set $S_i := \mbox{span}\{U_k V^*_j \}_{k \in I, j \in J}$. \\
    \hline
\end{tabular}
\end{procedure}

Procedure \ref{proc:construct_reg} suggests a strategy for solving the problem \eqref{eq:problem_g}: we can alternate between the minimization of the function \eqref{eq:obj_g} along the  first argument $\delta$ and the second argument $S$. This kind of approach, where the function is alternatingly minimized along different variables, is called \emph{block coordinate descent}. The word ``block'' refers to the fact that the minimum is found for a block of coordinates simultaneously: in our case, we have two blocks, where the first block consists of $p$ coordinates corresponding to the first argument $\delta$ and the second ``block" consists of the second argument $S$ which is minimized over the set $\mathfrak{S}_n(\F)$. The resulting algorithm is outlined in Algorithm \ref{alg:reg}.

\begin{algorithm}
        \caption{Tikhonov regularization approach}
        \label{alg:reg}
        \begin{algorithmic}
        \STATE{\textbf{Input:} initial value for $\varepsilon$, tolerances $\mbox{tol}_1, \mbox{tol}_2,$ \\ 
         \quad a decrease parameter $k$, and a matrix $A = \sum_{l=1}^p B^{(l)} (\alpha_i)_l,$ \\
         \quad where $\{B^{(l)}\}_{l=1}^p$ is an orthonormal set. }
        \STATE{Define $\Delta_* := \infty I$, $S_{*} := \{0\}$ }
        \WHILE{$ \|\mbox{Proj}_{S_{*}^\perp} (A+\Delta_*) \|_F > \mbox{tol}_1$}
        \STATE{Define $\delta_1 := -[\alpha_1 \ \cdots \ \alpha_p]^T$,  $\delta_{0} := \infty \delta_1$, $S_0:=S_1:=\{0\}$, $i:=1$}
        \WHILE{$  (g_\varepsilon(\delta_{i-1},S_{i-1}) - g_\varepsilon(\delta_i,S_i)) > \mbox{tol}_2$}
        \STATE{Update $i := i+1$}
        \STATE{Compute $S_i \in \arg\min_{S \in \mathfrak{S}_n} g_\varepsilon(\delta_{i-1},S)$}
        \STATE{Compute $\delta_{{i}} := \arg\min_{\delta \in \F^p} g_\varepsilon(\delta,S_i)$}
        \ENDWHILE
        \STATE{Update $\Delta_* := \sum_{l=1}^p B^{(l)} (\delta_i)_l $}
        \STATE{Update $S_* := S_{i}$}
        \STATE{Update $\varepsilon := k\varepsilon$}
        \ENDWHILE
        \RETURN $\Delta_*$
        \end{algorithmic}
\end{algorithm}

\begin{proposition} \label{prop:converge_reg}
The sequence $(g_\varepsilon(\delta_{i},S_i))_{i \geq 2}$ generated by Algorithm \ref{alg:reg} converges. Moreover, the sequence $(\delta_{i},S_i)_{i \geq 2}$ has an accumulation point in the feasible set attaining the value $\lim_{i \rightarrow \infty} g_\varepsilon(\delta_{i},S_i)$. 
\end{proposition}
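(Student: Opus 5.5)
The plan mirrors the proof for Algorithm~\ref{alg:alg1}: we first establish monotonicity of the block coordinate descent, and then use compactness.

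\textbf{Monotonicity and convergence of the values.} For $i \geq 2$, the step computing $S_{i+1} \in \arg\min_{S} g_\varepsilon(\delta_i, S)$ gives $g_\varepsilon(\delta_i,S_{i+1}) \leq g_\varepsilon(\delta_i,S_i)$. The step computing $\delta_{i+1} := \arg\min_\delta g_\varepsilon(\delta,S_{i+1})$ then gives $g_\varepsilon(\delta_{i+1},S_{i+1}) \leq g_\varepsilon(\delta_i,S_{i+1})$. Chaining the two inequalities shows that the sequence $(g_\varepsilon(\delta_i,S_i))_{i\geq 2}$ is non-increasing. Since $g_\varepsilon \geq 0$, it is bounded below and therefore converges, say to $g_*$. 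Both minimizers exist: for $\delta$ we have the closed form \eqref{deltastar}, and for $S$ we use Procedure~\ref{proc:construct_reg}.

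\textbf{Boundedness of the iterates.} For the $\delta$-component we have $\|\delta_i\|^2 \leq g_\varepsilon(\delta_i,S_i) \leq g_\varepsilon(\delta_2,S_2) < \infty$, so $(\delta_i)$ lies in a closed ball of $\F^p$. For the $S$-component, $S_i \in \mathfrak{S}_n(\F)$, which is compact under the metric $d$ of Remark~\ref{prop:compact}. The product of these two compact sets is compact, so a subsequence $(\delta_{i_k},S_{i_k})$ converges to some $(\delta_*,S_*)$ with $S_* \in \mathfrak{S}_n(\F)$. In other words, the accumulation point lies in the feasible set.

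\textbf{Value at the limit.} It remains to show $g_\varepsilon(\delta_*,S_*) = g_*$, and this step requires continuity of $g_\varepsilon$ jointly in $(\delta,S)$. It suffices to show that $S \mapsto \mathrm{Proj}_{S^\perp}$ is continuous on $\mathfrak{S}_n$ with respect to $d$, because $g_\varepsilon(\delta,S) = \|\delta\|^2 + \varepsilon^{-1}\|\mathrm{Proj}_{S^\perp}(A+\sum_l B^{(l)}\delta_l)\|_F^2$. Convergence in $d$ forces the subsequence to eventually lie in a single Grassmannian $\mathrm{Gr}_k$, since $d = 1$ across components. On $\mathrm{Gr}_k$ the orthogonal projector depends continuously on the subspace (the standard metric is equivalent to the distance between projectors). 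Hence $g_\varepsilon(\delta_{i_k},S_{i_k}) \to g_\varepsilon(\delta_*,S_*)$, and since the full sequence converges to $g_*$, the limit value is $g_*$.

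The main obstacle is making this continuity rigorous given the ad hoc disjoint-union metric, and in particular checking that the dimension of $S_i$ stabilizes along the subsequence. This is immediate from the definition of $d$, and it is essentially the same point already used to justify the Berge-type continuity of $f_\varepsilon$.
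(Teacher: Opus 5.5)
Your proposal is correct and follows the same route as the paper. Convergence of the values comes from monotonicity of the block coordinate descent plus nonnegativity, and the accumulation point comes from compactness of a ball in $\F^p$ times $\mathfrak{S}_n$ (Remark~\ref{prop:compact}) together with continuity of $g_\varepsilon$ in the product metric. You fill in details the paper leaves implicit: you bound $\|\delta_i\|^2$ by the objective value instead of citing Remark~\ref{remark:compact}, and you justify continuity via eventual stabilization of $\dim S_i$ under $d$ and continuity of the orthogonal projector on each Grassmannian.
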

\begin{proof}
    The first statement follows from the fact that the sequence $(g_\varepsilon(\delta_{i},S_i))_{i \geq 2}$ is non-increasing and is bounded from below by zero.
    For the second statement, we recall the observation in Remark \ref{remark:compact} that $\delta$ can be restricted to a compact set, and note that this observation holds also for Algorithm \ref{alg:reg}. Moreover, we note the following two facts: (i) $\mathfrak{S}_n$ is compact with respect to the metric $d$ (Remark \ref{prop:compact}) and (ii) $g_\varepsilon(\delta,S)$ is continuous with respect to a product metric induced by the Euclidean norm in the first argument, and $d$ in the second. These facts together prove the second statement.
\end{proof}

% Intuitively, one could think that this procedure has weaker potential for finding the global minimum as we are only exploiting one singular vector space associated to the right kernel. The ``non-regularized'' version would indeed perform only one iteration before getting stuck in the same singular vector space (corresponding to the same right kernel). However, as we will see in the numerical experiments, this approach often finds the same stationary point and is faster in practice. 

% Comment out for now -------------------------------
\subsection{Augmented Lagrangian} \label{sec:lagrangian}
% \textcolor{red}{Subtle point: Does augmented lagrangian even work for non-Riemannian feasible sets? I do not think so. In the interest of space, I do not make further additions to the below}

In order to attain the exact solution in the Tikhonov regularization approach, it is necessary to let $\varepsilon \to 0$. The benefit of a method based on the augmented Lagrangian formulation is that this is not required; however, this requires the inclusion of an additional dual variable $y$ to the objective function that needs to be updated appropriately (see \cite[Section~4.2.2]{Bertsekas} for details).

The augmented Lagrangian formulation for the objective function $g_\varepsilon$ is defined as
\begin{align} \label{eq:lagrangian}
    g_{\varepsilon,y}(\delta,S) = \| \delta \|^2 + \frac{1}{\varepsilon} \| M_S \delta - r_S \|^2 + 2 \langle y, M_S \delta - r_S \rangle. 
\end{align} 
Note that it is still possible to find a closed form expression for the minimizing $\delta$, when $\varepsilon, y$ and $S$ are fixed. This can be seen by expressing $g_{\varepsilon,y}(\delta,S)$ as
\begin{align} 
    g_{\varepsilon,y}(\delta,S) = \| \delta \|^2 + \frac{1}{\varepsilon} \| M_S \delta - r_S + \varepsilon y \|^2 - \varepsilon \|y \|^2. 
\end{align} 
The last term does not affect the minimizing $\delta$ and $S$ and can hence be ignored. As such, the minimizer with respect to $\delta$ can be computed as before with \eqref{deltastar} by simply updating the right hand side $r_S$ to include the term $-\epsilon y$.

In contrast to the Tikhonov regularized case, finding a minimizer $S_* \in \mathfrak{S}_n$ for \eqref{eq:lagrangian} does not have an easy-to-compute closed-form expression, when $\delta$ is kept fixed. For this reason, we resort to finding the minimizing value for $S$ numerically. We do this by restricting $S$ to the set $\{\mathcal{S}_v: \|v\|=1\}$, and numerically minimizing the function $(\delta,v) \mapsto g_{\varepsilon,y}(\delta,\mathcal{S}_v)$ under the constraint $\| v\| = 1$. For this, we use a Riemannian trust-regions method \cite{AbsilBaker} implemented in Manopt \cite{BoumalMishraAbsil}. Note that, in this case, $\|\mbox{Proj}_{\mathcal{S}_{v}^\perp} (A+\Delta) \|_F = \| (A+\Delta)v \|$. 

\begin{remark}
    Restricting to the set $\{\mathcal{S}_v: \|v\|=1\}$ is beneficial for numerical optimization since its elements can be parametrized by the unit sphere $\{v \in \F^n: \|v\|=1\}$, which is a Riemannian manifold. Moreover, we will see in Subsection \ref{sec:convergence} that this also helps us say more about the convergence properties of the resulting algorithm.  
\end{remark}

An algorithm based on the augmented Lagrangian approach is as follows. For fixed $\varepsilon$ and $y$, we minimize the function $g_{\varepsilon,y}(\delta,\mathcal{S}_v)$ with respect to $\delta, v$. After reaching a stationary point $(\delta_*,\mathcal{S}_{v_*})$, we perform the usual update on the parameter $\varepsilon$, and set $y := y + \frac{1}{\varepsilon}(M_{\mathcal{S}_{v_*}} \delta - r_{\mathcal{S}_{v_*}}).$ We repeat this process until the norm of the constraint $\|(A+ \Delta) v\|$ reaches some prescribed tolerance. This procedure is outlined in Algorithm \ref{alg:lagrangian}. Note that Proposition \ref{prop:converge_reg} can be easily extended to this algorithm as well.

\begin{algorithm}
    \caption{Augmented Lagrangian approach}
    \label{alg:lagrangian}
    \begin{algorithmic}
    \STATE{\textbf{Input:} initial value for $\varepsilon$, tolerances $\mbox{tol}_1, \mbox{tol}_2,$ \\ 
    \quad a decrease parameter $k$, and a matrix $A = \sum_{l=1}^p B^{(l)} (\alpha_i)_l,$ \\
    \quad where $\{B^{(l)}\}_{l=1}^p$ is an orthonormal set.}
    \STATE{Define $\Delta_* := \infty I$, $S_{*} := \{0\}$ }
    \WHILE{$ \|\mbox{Proj}_{S_{*}^\perp} (A+\Delta_*) \|_F > \mbox{tol}_1$}
    \STATE{Define $\delta_1 := -[\alpha_1 \ \cdots \ \alpha_p]^T$,  $\delta_{0} := \infty \delta_1$, $v_0 := v_1 := e_1$, $i:=1$}
    \WHILE{$ (g_{\varepsilon,y}(\delta_{i-1},\mathcal{S}_{v_{i-1}}) - g_{\varepsilon,y}(\delta_i,\mathcal{S}_{v_{i}})) > \mbox{tol}_2$}
    \STATE{Update $i := i+1$}
    \STATE{Compute $v_i \in \arg\min_{\|v\| = 1} g_{\varepsilon,y}(\delta_{i-1},\mathcal{S}_{v})$}
    \STATE{Compute $\delta_{{i}} := \arg\min_{\delta \in \F^p} g_{\varepsilon,y}(\delta,\mathcal{S}_{v_{i}})$}
    \ENDWHILE
    \STATE{Update $\Delta_* := \sum_{l=1}^p B^{(l)} (\delta_i)_l $}
    \STATE{Update $S_* := \mathcal{S}_{v_i}$}
    \STATE{Update $\varepsilon := k\varepsilon$}
    \STATE{Update $y := y + \frac{1}{\varepsilon}(M_{\mathcal{S}_{v_{i}}} \delta_i - r_{\mathcal{S}_{v_{i}}})$}
    \ENDWHILE
    \RETURN $\Delta_*$
    \end{algorithmic}
\end{algorithm}

\subsection{Convergence} \label{sec:convergence}
In this subsection, we prove results related to the convergence behavior of Algorithms \ref{alg:reg} and \ref{alg:lagrangian}.

\begin{proposition} \label{prop:convergence_inner}
    Assume that the inner iteration of Algorithm \ref{alg:lagrangian} converges to a point $(\delta_*,v_*)$. It holds that $(\delta_*,v_*)$ is a stationary point of the objective function $f_{\varepsilon,y}$. 
\end{proposition}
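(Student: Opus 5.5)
The plan is to show that a fixed point of the alternating scheme satisfies the first-order optimality conditions of the joint problem on $\F^p \times \{v : \|v\|=1\}$. Here the objective is $F(\delta,v) := g_{\varepsilon,y}(\delta,\mathcal{S}_v)$, which is what $f_{\varepsilon,y}$ refers to. The Riemannian gradient of $F$ on this product manifold splits into two blocks: the Euclidean partial gradient in $\delta$, and the projection of the partial gradient in $v$ onto the tangent space $\{w : \mathrm{Re}\langle w, v\rangle = 0\}$ of the sphere. It therefore suffices to show that both blocks vanish at $(\delta_*,v_*)$.

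\textbf{Step 1: smoothness of $F$.} Since $\|\mbox{Proj}_{\mathcal{S}_v^\perp}(A+\Delta)\|_F = \|(A+\Delta)v\|$ for unit $v$, I will write, up to the constant $-\varepsilon\|y\|^2$,
$$F(\delta,v) = \|\delta\|^2 + \frac{1}{\varepsilon}\Big\|\Big(A + \sum_l B^{(l)}\delta_l\Big)v + \varepsilon\tilde y\Big\|^2,$$
where $\tilde y$ is the dual variable in the representation for $\mathcal{S}_v$. This is a polynomial in $(\delta,v)$ and $\bar{(\cdot)}$ of these, hence smooth. Consequently the Riemannian gradient is well defined and depends continuously on the point.

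\textbf{Step 2: passing to the limit in the two updates.}
\begin{itemize}
\item \emph{$\delta$-block.} For every $i$, $\delta_i$ minimizes the strongly convex quadratic $F(\cdot,v_i)$, so $\nabla_\delta F(\delta_i,v_i)=0$. Letting $i\to\infty$ and using the continuity from Step 1 gives $\nabla_\delta F(\delta_*,v_*)=0$.
\item \emph{$v$-block.} $v_i$ is a global minimizer of $F(\delta_{i-1},\cdot)$ on the sphere, so $\mathrm{grad}_v F(\delta_{i-1},v_i)=0$. Since $\delta_{i-1}\to\delta_*$ and $v_i\to v_*$, continuity again gives $\mathrm{grad}_v F(\delta_*,v_*)=0$.
\end{itemize}
Combining the two blocks, the full Riemannian gradient vanishes at $(\delta_*,v_*)$, so this point is stationary.

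\textbf{Main obstacle.} The delicate point is the precise meaning of ``converges''. If only the function values converge, one would need a sufficient-decrease or uniqueness argument to conclude that the iterates themselves converge. I will read the hypothesis as convergence of the iterates $(\delta_i,v_i)$ themselves, which is what the statement asserts.

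A second subtlety is that $v$ and $e^{i\theta}v$ (respectively $\pm v$ in the real case) define the same $\mathcal{S}_v$. The argmin over $v$ is therefore never unique, and the subsequence of $v_i$ must be chosen to have a limit. Stationarity is invariant under this phase action, so the phase ambiguity does no harm once convergence of the iterates is assumed.

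Finally, if in practice $v_i$ is only a critical point computed by Riemannian trust regions rather than an exact global minimizer, the same argument goes through: the condition $\mathrm{grad}_v F(\delta_{i-1},v_i)=0$ is all that Step 2 uses.
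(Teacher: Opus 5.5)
Your proof is correct and follows the same route as the paper's. Both arguments use that the $\delta$-gradient vanishes along $(\delta_i,v_i)$ and the Riemannian $v$-gradient vanishes along $(\delta_{i-1},v_i)$. Since both sequences converge to $(\delta_*,v_*)$, continuity of the partial gradients gives stationarity, and you additionally justify that continuity via smoothness.

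One side remark is wrong, though your argument never uses it. For $y\neq 0$ the term $2\langle y,(A+\Delta)v\rangle$ changes under $v\mapsto -v$ (or $v\mapsto e^{i\theta}v$). So $g_{\varepsilon,y}(\delta,\mathcal{S}_v)$ depends on the representative $v$, not only on $\mathcal{S}_v$, and your claims that the $v$-argmin is never unique and that the objective is phase-invariant fail in that case.
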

\begin{proof}
    In each iteration $i$, we find a point $(\delta_{i-1},v_i)$ at which the gradient with respect to $v$ is zero, and a point $(\delta_i, v_i)$ at which the gradient with respect to $\delta$ is zero. It holds that both of these sequences $((\delta_{i-1}, v_i))_{i\in\N}$ and $((\delta_{i}, v_i))_{i\in\N}$ converge to $(\delta_*,v_*)$. Hence, by continuity of partial derivatives, the gradient with respect to both arguments $\delta$ and $v$ is zero at the limit point.
\end{proof}
\begin{remark}
Proposition \ref{prop:convergence_inner} can be adapted for Algorithm \ref{alg:reg} by restricting $S$ to the set $\{\mathcal{S}_v: \|v\|=1\}$. 
\end{remark}
%This follows directly by choosing $y=0$ in Algorithm \ref{alg:lagrangian}.

% We recall that the original problem that we want to solve is 
% \begin{align} \label{eq:original}
%     \min_{\|v\|=1} \min_{\Delta \in \mathcal{T}} \| \Delta \|_F^2 \mbox{ s.t. } (A+\Delta) v = 0. 
% \end{align} 

% We have outlined two algorithms for performing this minimization: one based on coordinate descent for the Tikhonov regularized function (Algorihm \ref{alg:coordinate}), and one based on the corresponding augmented Lagrangian formulation (Algorihm \ref{alg:lagrangian}). We note that the augmented Lagrangian formulation
% \begin{align} \label{eq:augmented_subproblem}
%     \min_{\|v\|=1} \min_{\Delta \in \mathcal{T}} \| \Delta \|_F^2 + \frac{1}{\varepsilon} \| (A+\Delta) v \|^2 + 2 \langle y, (A+\Delta) v \rangle
% \end{align} 
% reduces to the Tikhonov regularized method when $y=0$ in each iteration. 

% We have shown that the inner iteration of Algorihm \ref{alg:lagrangian} that solves the subproblem \eqref{eq:augmented_subproblem} converegences to stationary points for each $(\varepsilon, y)$. 

Denote by $\Delta_{(\varepsilon, y)}$ the output of the inner iteration of Algorithm \ref{alg:lagrangian}. Next, we will characterize when the sequence of these points $(\Delta_{(\varepsilon, y)})$ converges to a stationary point of the original problem \eqref{eq:problem}. Since we restrict $v$ to lie on the unit sphere, we have an additional non-linear constraint that needs to be dealt with. Gladly, there exist convergence results for augmented Lagrangian methods on arbitrary Riemannian manifolds, of which the unit sphere is one instance\footnote{More accurately, we need that the Cartesian product of $\mathcal{T} \subset \F^{n\times n}$ with the unit sphere is a Riemannian manifold, which is also true.}. In \cite{LiuBoumal}, the authors have outlined convergence results for the augmented Lagrangian method on Riemannian manifolds. We state the main result \cite[Proposition 3.2]{LiuBoumal} below as Proposition \ref{prop:convergence}. The result refers to Linear Independence Constraint Qualifications (LICQ) and First-Order Necessary Conditions (KKT conditions), whose technical definitions can be found in \cite[Equations (4.3) and (4.8)]{Zhang}. 

\begin{proposition}[\cite{LiuBoumal}] \label{prop:convergence}
Let $\Omega$ denote the set of feasible points of the problem \eqref{eq:problem}. Consider the Riemannian augmented Lagrangian method \cite[Algorithm 1]{LiuBoumal} with a sequence of tolerances $(\tau_k)_{k \in \Z^+}$ such that  $ \mbox{lim}_{k \rightarrow \infty} \tau_k = 0$. If at each iteration $k$ the subsolver produces a point $x_{k+1}$ satisfying
\begin{equation}
\label{eq:stop_gradient}
    \left\|\operatorname{grad}_x \mathcal{L}_{\varepsilon_k}\left(x_{k+1}, y^k\right)\right\| \leq \tau_k,
\end{equation}
and if the sequence $\left\{x_k\right\}_{k=0}^{\infty}$ has a limit point $\overline{x} \in \Omega$ where LICQ conditions are satisfied, then $\overline{x}$ satisfies KKT conditions of the original constrained minimization problem. 
\end{proposition}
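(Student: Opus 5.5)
This result is quoted from \cite{LiuBoumal}, so the plan is to reproduce the standard Riemannian augmented Lagrangian argument in our setting. Here the manifold is $\mathcal{M} = \mathcal{T} \times \{v : \|v\|=1\}$, and the equality constraint is $h(x) = (A+\Delta)v = 0$, with $x=(\Delta,v)$. We work with the augmented Lagrangian
\[
\mathcal{L}_{\varepsilon}(x,y) = \|\Delta\|_F^2 + 2\langle y, h(x)\rangle + \tfrac{1}{\varepsilon}\|h(x)\|^2 .
\]
Its Riemannian gradient is
\[
\operatorname{grad}_x \mathcal{L}_{\varepsilon_k}(x,y^k) = \operatorname{grad}\|\Delta\|_F^2 + \sum_j \bigl(2 y^k_j + \tfrac{2}{\varepsilon_k} h_j(x)\bigr)\operatorname{grad} h_j(x).
\]
We set $\lambda^{k+1} := y^k + \varepsilon_k^{-1} h(x_{k+1})$, up to the factor $2$ convention, and note that this is exactly the multiplier update of the algorithm. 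The hypothesis \eqref{eq:stop_gradient} then reads
\[
\Bigl\|\operatorname{grad} f(x_{k+1}) + 2\sum_j \lambda^{k+1}_j \operatorname{grad} h_j(x_{k+1})\Bigr\| \le \tau_k ,
\]
which is an approximate KKT stationarity condition with multiplier $\lambda^{k+1}$.

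Next, pass to a subsequence $x_{k+1}\to\overline{x}\in\Omega$. The key step is to show that the multipliers $\lambda^{k+1}$ stay bounded along this subsequence. We argue by contradiction. Suppose $\|\lambda^{k+1}\|\to\infty$. Divide the inequality above by $\|\lambda^{k+1}\|$ and extract a further subsequence along which $\lambda^{k+1}/\|\lambda^{k+1}\| \to \mu$ with $\|\mu\|=1$. Continuity of $\operatorname{grad} f$ and $\operatorname{grad} h_j$ on $\mathcal{M}$ (which is smooth here, since all maps are polynomial), together with $\tau_k\to 0$, gives $\sum_j \mu_j \operatorname{grad} h_j(\overline{x}) = 0$. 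Because the differentials are transported along the converging sequence in the tangent bundle, this limit is legitimate. It contradicts LICQ at $\overline{x}$, which states that the $\operatorname{grad} h_j(\overline{x})$ are linearly independent in $T_{\overline{x}}\mathcal{M}$.

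With boundedness established, extract a convergent subsequence $\lambda^{k+1}\to\overline{\lambda}$. Letting $k\to\infty$ in the stationarity inequality gives
\[
\operatorname{grad} f(\overline{x}) + 2\sum_j \overline{\lambda}_j \operatorname{grad} h_j(\overline{x}) = 0 .
\]
Since $\overline{x}\in\Omega$, we also have $h(\overline{x})=0$, so $\overline{x}$ satisfies the KKT conditions in the sense of \cite[(4.8)]{Zhang}.

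I expect the main obstacle to be the boundedness step. It needs care in two places. First, comparing tangent vectors at different base points must be done rigorously, for instance by embedding $\mathcal{M}$ in the ambient Euclidean space and using that the Riemannian gradient is the tangent projection of the Euclidean gradient, with the projection continuous in $x$. Second, one must check that the algorithm's multiplier update matches $\lambda^{k+1}$ exactly. Everything else is routine continuity.
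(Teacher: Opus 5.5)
Your proposal is correct and is essentially the standard argument behind the cited result; the paper itself gives no proof and simply quotes \cite[Proposition 3.2]{LiuBoumal}. The shared route is to read off the multiplier estimate $y^k+\varepsilon_k^{-1}h(x_{k+1})$ from the gradient of the augmented Lagrangian, bound it via LICQ by the normalization-and-contradiction argument, and pass to the limit. The worry in your last paragraph is unnecessary: your argument never uses that this estimate equals the algorithm's next multiplier (in \cite[Algorithm 1]{LiuBoumal} that update is in fact safeguarded by clipping), and the inequality-constraint bookkeeping of the general result does not arise for this equality-constrained problem.
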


Let us set $x = (\delta, S)$ and $\mathcal{L}_{\varepsilon}\left(x, y\right) = g_{\varepsilon,y}(\delta,S)$ in Proposition \ref{prop:convergence}. Proposition \ref{prop:convergence_inner} shows that the condition \eqref{eq:stop_gradient} in Proposition \ref{prop:convergence} is satisfied whenever the inner iteration of Algorithm \ref{alg:lagrangian} converges to a point. In practice, this requires a suitable choice of tolerances within the algorithm. This implies that a limit point of the outer iteration in Algorithm \ref{alg:lagrangian} satisfies the KKT conditions of the original problem \eqref{eq:problem} when the additional LICQ conditions \cite[Equations (4.3) and (4.8)]{Zhang} are satisfied. The same convergence result applies for Algorithm \ref{alg:reg} when restricted to sets of type $\mathcal{S}_v$, in which case $y=0$ in each iteration (see \cite[Algorithm 1]{LiuBoumal}).

The LICQ conditions are derived in \cite{oracle} for this problem. These conditions are equivalent with assuming that the matrix 
\[
\begin{bmatrix}
    M_Y^* \\
    (\mbox{Proj}_{\mathcal{S}_{v}} (A + \Delta))^*
\end{bmatrix}
\]
has full rank at the limit point. Here, $M_Y$ is defined as in Corollary \ref{cor:fastest_eval}.

\section{Numerical experiments} \label{sec:numerical}

Numerical experiments were performed on MATLAB R2025b, by using Manopt 8.0 \cite{BoumalMishraAbsil}, and on a machine equipped with an Intel Core i5-9400F processor. The source code for the method presented in this paper can be found in the GitHub repository \href{https://github.com/NymanLauri/structured-distance-to-singularity}{github.com/NymanLauri/structured-distance-to-singularity}. Unless stated otherwise, the experiments were run by using the default values of the parameters. For an efficient computation of the smallest singular triplet of a matrix, the function \texttt{svdmin.m}, written by Ethan N. Epperly, Yuji Nakatsukasa and Taejun Park, implements a solver based on the work of \cite{yujisvd}.

\subsection{Comparison of singular vector spaces}
First, we compare four strategies for how to choose the singular vector spaces in Procedure \ref{proc:construct_reg} of Algorithm \ref{alg:reg}: (i) construct a space in the set $\{\mathcal{S}_v: \|v\|=1\}$ in each iteration (ii) construct a space in the set $\{\mathcal{S}_v^*: \|v\|=1\}$ in each iteration (iii) alternate between spaces in the sets $\{\mathcal{S}_v: \|v\|=1\}$ and $\{\mathcal{S}_v^*: \|v\|=1\}$ in successive iterations (iv) alternate between spaces in the set $\{\mathcal{S}_v: \|v\|=1\}$ and singular subspaces that have underlying zero submatrix of size $(n-1) \times 2$. We generate randomly $5 \cdot 10^3$ Toeplitz matrices of size $n=100$ such that each diagonal is drawn independently from the unit normal distribution, and we compute the Toeplitz structured distance to singularity for these. The results are shown in Table \ref{table:compare_alternate}. The computed distances show only marginal differences, while strategies (iii) and (iv) required a slightly larger amount of iterations. The running time of (iv) is significantly worse since the computation of the matrix $M$ in Proposition \ref{prop:fast_eval} requires a full \texttt{svd}, in contrast to the one in Corollary \ref{cor:fastest_eval}, which only requires the smallest singular triplet. The running time of (iii) is also significantly, approximately 10 \% larger than that of (i) and (ii). The fact that (iii) performs worse than (i) and (ii) is surprising, given that (iii) essentially alternates between strategies (i) and (ii). These results suggest using either strategy (i) or (ii) by default. In the following experiments, we will opt for strategy (i).

\begin{table}[h!]
\caption{Comparison of strategies (i)-(iv) to find the Toeplitz structured distance to singularity for randomly generated Toeplitz structured matrices of size $n=100$. The table shows, for each strategy, the median values for the computed distances, total number of inner iterations, as well as running times. }
%The table also records the corresponding running times.}
\label{table:compare_alternate}
\begin{center}
\begin{tabular}{|c||c|c|c|}
\hline
Strategy & Distance & \# iterations & Running time (s) \\
\hline
(i) &  0.5812 & 629 &  0.4181  \\
\hline
(ii) &  0.5812 & 629 &  0.4223 \\
\hline
(iii) &  0.5815 & 655 &  0.4651 \\
\hline
(iv) &  0.5815 & 660 &  2.1310 \\
\hline
% (42,28) &  &  \\
% \hline
\end{tabular}
\end{center}
\end{table}

\subsection{Comparison of regularization approaches}
We next compare the Ti\-khonov regularization approach of Algorithm \ref{alg:reg} and the augmented Lagrangian approach of Algorithm \ref{alg:lagrangian} for Toeplitz structured matrices of various sizes in the range $0 < n \leq 500$. Figure \ref{fig:lagrangian_comparison} visualizes the median of 40 runs for the running times as well as total iteration counts. The computed distances for both algorithms were indistinguishable from each other, and were hence omitted from the picture. The running times, however, show an interesting trend: the Tikhonov regularization approach is significantly faster for matrices of size $n \leq 300$, while for larger sizes, the augmented Lagrangian approach becomes significantly faster. The total amount of iterations is significantly smaller for the augmented Lagrangian approach throughout the whole interval for $n$. Interestingly, the total amount of iterations remains rather stagnant as $n$ increases, for both approaches. This suggests that both approaches scale very well for larger problems, while the augmented Lagrangian approach seems to scale better. To support this conclusion, we next compare the method of this paper against existing, state-of-the-art methods.    

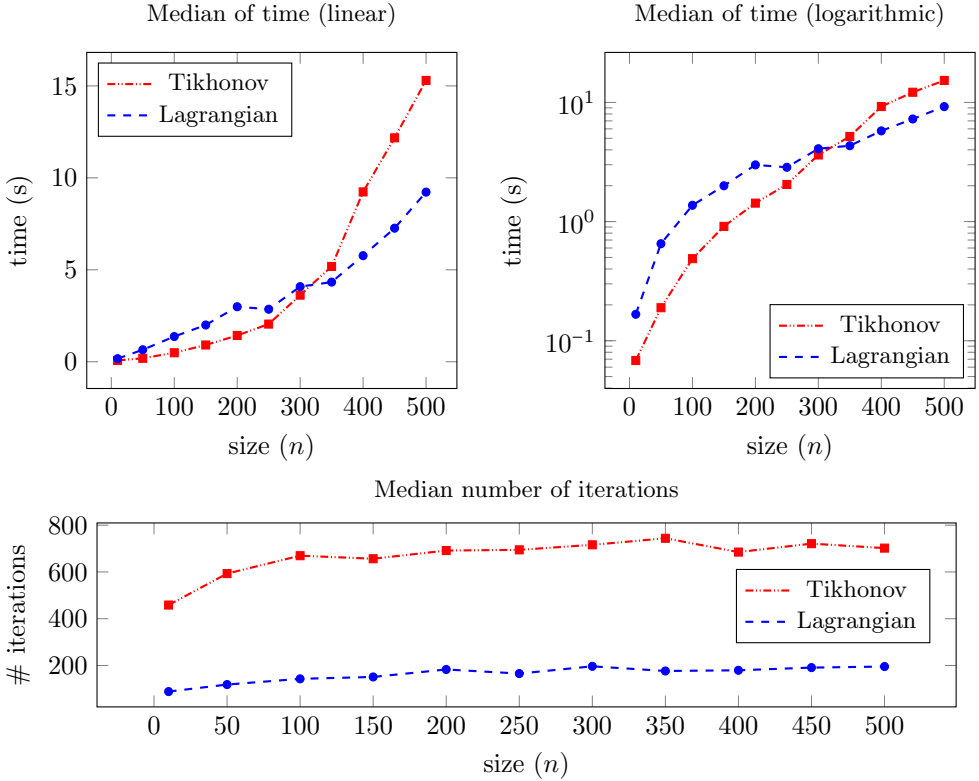
\begin{figure}[h!]
\begin{tikzpicture}
    \begin{axis}[
            legend pos = north west,
            width=0.5*\linewidth,
            xtick={0, 100, 200, 300, 400, 500},
            % log ticks with fixed point,
            % ytick={0.1, 0.3, 1, 3, 10},
           height= 6cm,
           title={{\small{Median of time (linear)}}},
            xlabel={size ($n$)},
            ylabel={time (s)}
        ]
        \addplot[red, densely dashdotdotted, thick] table[y index = 1] 
        {lagrangian_times.dat};
         \addplot[blue, dashed, thick] table[y index = 2] 
        {lagrangian_times.dat};
        \addplot[only marks, red, mark size =1.5pt,  mark=square*,
    mark options={fill=red, solid}] table[y index = 1] 
        {lagrangian_times.dat};
        \addplot[only marks, blue, dashed, mark size =1.5pt,  mark=*, mark options={fill=blue, solid}] table[y index = 2] 
          {lagrangian_times.dat};
        \legend{{\small{Tikhonov}}, {\small{Lagrangian}}}
      \end{axis}
    \end{tikzpicture}
    \begin{tikzpicture}
    \begin{semilogyaxis}[
            legend pos = south east,
            width=0.5*\linewidth,
            xtick={0, 100, 200, 300, 400, 500},
            % log ticks with fixed point,
            % ytick={0.1, 0.3, 1, 3, 10},
           height= 6cm,
           title={{\small{Median of time (logarithmic)}}},
            xlabel={size ($n$)},
            ylabel={time (s)}
        ]
        \addplot[red, densely dashdotdotted, thick] table[y index = 1] 
        {lagrangian_times.dat};
         \addplot[blue, dashed, thick] table[y index = 2] 
        {lagrangian_times.dat};
        \addplot[only marks, red, mark size =1.5pt,  mark=square*,
    mark options={fill=red, solid}] table[y index = 1] 
        {lagrangian_times.dat};
        \addplot[only marks, blue, dashed, mark size =1.5pt,  mark=*, mark options={fill=blue, solid}] table[y index = 2] 
          {lagrangian_times.dat};
        \legend{{\small{Tikhonov}}, {\small{Lagrangian}}}
      \end{semilogyaxis}
    \end{tikzpicture}
    \begin{tikzpicture}
         \begin{axis}[
            % legend pos = north west,
            legend style={at={(0.97,0.75)}},
            width=\linewidth,
            xtick={0, 50, 100, 150, 200, 250, 300, 350, 400, 450, 500},
            ytick={0, 200, 400, 600, 800},
            height=4cm,
            title={{\small{Median number of iterations}}},
            xlabel={size ($n$)},
            ylabel={\# iterations}
        ]
        \addplot[red, densely dashdotdotted, thick] table[y index = 1] 
        {lagrangian_counts.dat};
         \addplot[blue, dashed, thick] table[y index = 2]
         {lagrangian_counts.dat};
        \addplot[only marks, red, mark size =1.5pt,  mark=square*,
    mark options={fill=red, solid}] table[y index = 1] 
        {lagrangian_counts.dat};
        \addplot[only marks, blue, dashed, mark size =1.5pt,  mark=*, mark options={fill=blue, solid}] table[y index = 2] 
        {lagrangian_counts.dat};
        \legend{{\small{Tikhonov}}, {\small{Lagrangian}}}
      \end{axis}
    \end{tikzpicture}
    \caption{Comparison between the Tikhonov regularization approach and the augmented Lagrangian approach for Toeplitz structured matrices of increasing sizes. The median values of 40 runs were used for plotting. }
    \label{fig:lagrangian_comparison}
\end{figure}

%Next, we compare the method of this paper against existing methods. 
\subsection{Comparison against existing methods}
Currently, the best two methods existing in the literature are arguably the Riemann-Oracle method outlined in \cite{oracle}, and the ODE-based approach outlined in \cite{Sicilia}. % \cite[Section V.8]{guglielmi_book}. 
The Riemann-Oracle method is a flexible framework for solving various matrix nearness problems. The numerical experiments presented in \cite{oracle} make a strong case for it being the best algorithm currently in the literature. However, the Riemann-Oracle method has not been directly compared with the ODE-based approach of \cite{Sicilia} which utilizes an underlying rank-1 structure of the problem. %for the structured distance to singularity problem. %which avoids the computation of eigenvalues entirely, and hence has potential for being the most efficient algorithm currently. 
While the framework of \cite{Sicilia} in theory works for any linear structure, their algorithm focuses on sparse structures. For this reason, we compare against the method of \cite{Sicilia} only in the sparse case. 

For the method based on matrix differential equations of \cite{Sicilia}, the comparison was run using the MATLAB codes kindly provided by the authors. For the Riemann-Oracle method of \cite{oracle}, we use the code that is available at \href{https://github.com/fph/RiemannOracle}{github.com/fph/\linebreak RiemannOracle}. There exist two different formulations for the Riemann-Oracle meth\-od: one based on the penalty method, and one based on the augmented Lagrangian approach. In order for the comparison to be as fair as possible, we compare against both formulations. 

The Riemann-Oracle method implements multiple different update strategies for their regularization parameter. In the numerical experiments of \cite{oracle}, the authors use an adaptive update strategy for the regularization parameter in the penalty method formulation, which is achieved by setting \texttt{options.epsilon\_decrease = ‘f’} in the \texttt{options} structure that is provided to the function. For the augmented Lagrangian method, they use the default decrease strategy. In our numerical comparisons, we use these same update strategies for the Riemann-Oracle algorithm.

In order for our comparisons to be as fair as possible, we make sure that the constraint violation\footnote{We measure constraint violation with the smallest singular value of the output.} in the numerical experiments is smaller for the method of this paper than for Riemann-Oracle or the ODE approach. To achieve this, we use a stopping criterion of $10^{-14}$ for the constraint in the Riemann-Oracle algorithm, which can be achieved by setting \texttt{options.stopping\_criterion = 1e-14}. When using this value for the stopping criterion for Riemann-Oracle, and the default stopping criterion for the ODE approach, the mean and median constraint violations in the numerical experiments were the smallest for the method of this paper. 

\subsubsection{Toeplitz structures}
First, we perform a comparison for the Toeplitz structured distance to singularity problem. In this experiment, we compare the augmented Lagrangian approach of Algorithm \ref{alg:lagrangian} with the Riemann-Oracle method of \cite{oracle}. %The authors of \cite{oracle} explain that the augmented Lagrangian choice for the Riemann-Oracle method typically works the best for matrices of large size, and the documentation of their code also recommends its use. Our heuristic observations support this idea: we tried running this experiment for both, the penalty method approach as well as the augmented Lagrangian approach of \cite{oracle}, and the approach based on the penalty method formulation was significantly slower, which prevented us from performing the comparison with larger matrices. For these reasons, we compare our method against 
%In order for the comparison to be as fair as possible, we use the augmented Lagrangian formulation for the method of this paper as well, although Figure \ref{fig:lagrangian_comparison} suggests that the Tikhonov regularization approach would perform better for moderately sized matrices. 
To do this, we generate varying sizes of Toeplitz structured matrices by sampling the values of each diagonal independently from the unit normal distribution. For a sample of 40 matrices, Figure \ref{fig:RO_comparison} shows the median values for the computed distances to singularity as well as the running times. Based on Figure \ref{fig:RO_comparison}, it is clear that the method of this paper offers an enormous speedup compared to the Riemann-Oracle algorithm, while the output show only very minor differences. More precisely, the relative difference in the computed distances is less than $0.03$ for $n=10, 50$, and less than $10^{-6}$ for $n \geq 100$. The running time of the Riemann-Oracle algorithm became impractically slow for a statistical experiment at size $n=150$ for the penalty method formulation, and at size $n=200$ for the augmented Lagrangian formulation. At these sizes, the method of this paper offers an improvement of one to two orders of magnitude in running time. The logarithmic plot suggests that the difference in the running time becomes orders of magnitude wider for larger sizes.  

% \begin{table}[h!]
% \caption{Comparison with the RO method \cite{oracle}. The table shows, for both algorithms, the computed distances to a nearest singular Toeplitz structured matrix for randomly generated matrices of sizes $n=100, 200, 300$. %The distances are measured relative to the norm of the input pencil. 
% The table also records the corresponding running times.}
% \label{table:structured_distance}
% \begin{center}
% \begin{tabular}{|c||c|c|c|c|}
% \hline
%  &\multicolumn{2}{c|}{Distance}&\multicolumn{2}{c|}{Running time (s)} \\
% \hline
% Size & This paper & RO & This paper & RO  \\
% \hline
% 100 &  0.0394 & 0.0394 &  0.35 & 24.47  \\
% \hline
% 200 &  0.7416 & 0.7415 &  2.24 & 81.68 \\
% \hline
% 300 &  0.6730 & 0.6693 &  6.96 & 795.75\\
% \hline
% \end{tabular}
% \end{center}
% \end{table}

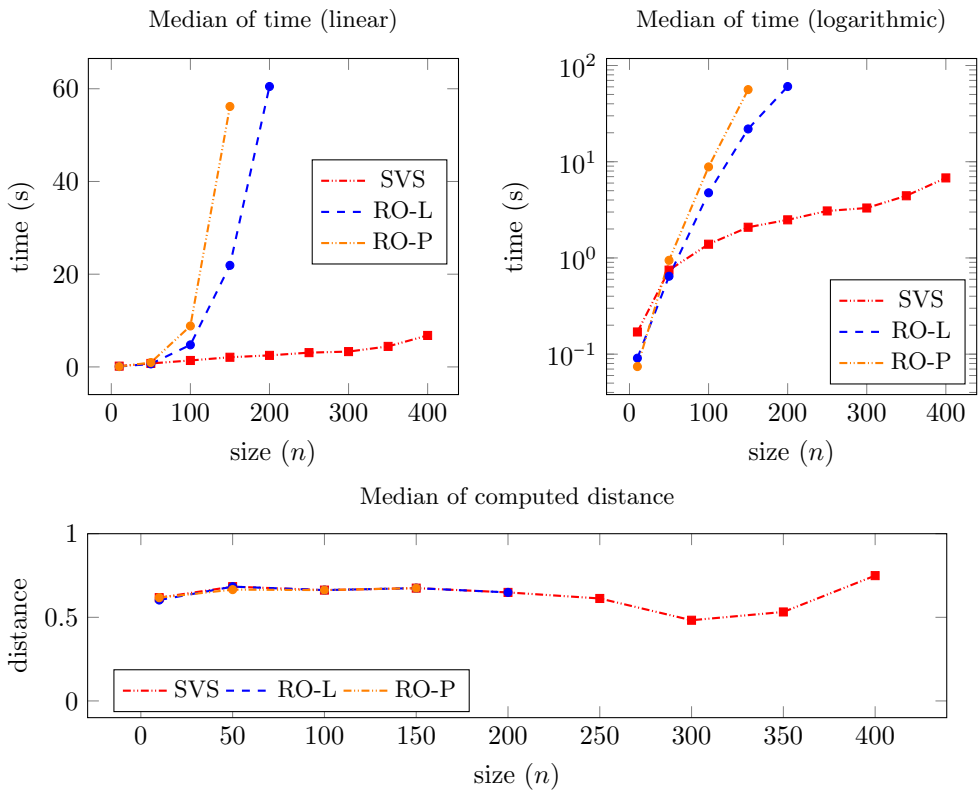
\begin{figure}[h!]
\begin{tikzpicture}
    \begin{axis}[
            % legend pos = north west,
            legend style={at={(0.97,0.7)}},
            width=0.5*\linewidth,
            xtick={0, 100, 200, 300, 400, 500},
            % log ticks with fixed point,
            % ytick={0.1, 0.3, 1, 3, 10},
           height= 6cm,
           title={{\small{Median of time (linear)}}},
            xlabel={size ($n$)},
            ylabel={time (s)}
        ]
        \addplot[red, densely dashdotdotted, thick] table[y index = 1] 
        {Toe_times.dat};
         \addplot[blue, dashed, thick, restrict x to domain=0:200] table[y index = 2] 
        {Toe_times.dat};
        \addplot[orange, densely dashdotdotted, thick, restrict x to domain=0:150] table[y index = 3] 
        {Toe_times.dat};
        \addplot[only marks, red, mark size =1.5pt,  mark=square*,
    mark options={fill=red, solid}] table[y index = 1] 
        {Toe_times.dat};
        \addplot[only marks, blue, dashed, mark size =1.5pt,  mark=*, mark options={fill=blue, solid}, restrict x to domain=0:200] table[y index = 2] 
          {Toe_times.dat};
        \addplot[only marks, orange, dashed, mark size =1.5pt,  mark=*, mark options={fill=orange, solid}, restrict x to domain=0:150] table[y index = 3] 
          {Toe_times.dat};
        \legend{{\small{SVS}}, {\small{RO-L}}, {\small{RO-P}}}
      \end{axis}
    \end{tikzpicture}
    \begin{tikzpicture}
    \begin{semilogyaxis}[
            legend pos = south east,
            width=0.5*\linewidth,
            xtick={0, 100, 200, 300, 400, 500},
            % log ticks with fixed point,
            % ytick={0.1, 0.3, 1, 3, 10},
           height= 6cm,
           title={{\small{Median of time (logarithmic)}}},
            xlabel={size ($n$)},
            ylabel={time (s)}
        ]
        \addplot[red, densely dashdotdotted, thick] table[y index = 1] 
        {Toe_times.dat};
         \addplot[blue, dashed, thick, restrict x to domain=0:200] table[y index = 2] 
        {Toe_times.dat};
        \addplot[orange, densely dashdotdotted, thick, restrict x to domain=0:150] table[y index = 3] 
        {Toe_times.dat};
        \addplot[only marks, red, mark size =1.5pt,  mark=square*,
    mark options={fill=red, solid}] table[y index = 1] 
        {Toe_times.dat};
        \addplot[only marks, blue, dashed, mark size =1.5pt,  mark=*, mark options={fill=blue, solid}, restrict x to domain=0:200] table[y index = 2] 
          {Toe_times.dat};
        \addplot[only marks, orange, dashed, mark size =1.5pt,  mark=*, mark options={fill=orange, solid}, restrict x to domain=0:150] table[y index = 3] 
          {Toe_times.dat};
        \legend{{\small{SVS}}, {\small{RO-L}}, {\small{RO-P}}}
      \end{semilogyaxis}
    \end{tikzpicture}
    \begin{tikzpicture}
         \begin{axis}[
            legend pos = south west,
            legend columns=0,
            width=\linewidth,
            xtick={0, 50, 100, 150, 200, 250, 300, 350, 400, 450, 500},
            ytick={0, 0.5, 1},
            ymin=-0.1, ymax=1,
            height=4cm,
            title={{\small{Median of computed distance}}},
            xlabel={size ($n$)},
            ylabel={distance}
        ]
         \addplot[red, densely dashdotdotted, thick] table[y index = 1] 
        {Toe_norms.dat};
         \addplot[blue, dashed, thick, restrict x to domain=0:200] table[y index = 2] 
        {Toe_norms.dat};
        \addplot[orange, densely dashdotdotted, thick, restrict x to domain=0:150] table[y index = 3] 
        {Toe_norms.dat};
        \addplot[only marks, red, mark size =1.5pt,  mark=square*,
    mark options={fill=red, solid}] table[y index = 1] 
        {Toe_norms.dat};
        \addplot[only marks, blue, dashed, mark size =1.5pt,  mark=*, mark options={fill=blue, solid}, restrict x to domain=0:200] table[y index = 2] 
          {Toe_norms.dat};
        \addplot[only marks, orange, dashed, mark size =1.5pt,  mark=*, mark options={fill=orange, solid}, restrict x to domain=0:150] table[y index = 3] 
          {Toe_norms.dat};
        \legend{{\small{SVS}}, {\small{RO-L}}, {\small{RO-P}}}
      \end{axis}
    \end{tikzpicture}
    \caption{Comparison between the singular vector space approach of this paper (denoted by SVS) and the Riemann-Oracle method of \cite{oracle} (denoted by RO-L and RO-P) for Toeplitz structured matrices of increasing sizes. RO-L denotes the augmented Lagrangian formulation of \cite{oracle}, while RO-P denotes its penalty method forumulation. The median values of 40 runs were used for plotting. }
    \label{fig:RO_comparison}
\end{figure}

\subsubsection{Sparse structures}

Next, we compare against the Riemann-Oracle \linebreak meth\-od \cite{oracle} and the ODE approach \cite{Sicilia} for the sparsely structured distance to singularity problem. For sparse structures, it is possible to optimize the evaluation of the solution $\delta_*$ in Corollary \ref{cor:fastest_eval}, similarly to \cite[Section 5]{oracle}. However, the use of augmented Lagrangian requires an additional numerical optimization step, which is difficult to optimize and becomes the bottleneck. For this reason, we choose to use the Tikhonov regularization approach of Algorithm \ref{alg:reg} for this experiment. %The Riemann-Oracle algorithm does not have a similar problem, however, as the use of augmented Lagrangian does not introduce an additional numerical optimization step in their algorithm. In fact, heuristic observations for sparse tridiagonal structures showed that the Tikhonov regularized algorithm of the Riemann-Oracle takes significantly longer than the augmented Lagrangian algorithm, to the point that it was not practical to perform experiments with sizes larger than $60 \times 60$. For this reason, we perform the comparison by using the augmented Lagrangian formulation of Riemann-Oracle. However, even with the augmented Lagrangian approach, in some rare occasions the Riemann-Oracle method takes orders of magnitudes longer to finish than the median running time. These outliers do not significantly change the median values for the running times, but they make it difficult to perform a statistical comparison. To mitigate this issue, we set \texttt{options.maxiter = 100} for the Riemann-Oracle method, which limits the total amount of iterations. For most input matrices of this numerical experiment, this stopping criterion does not get triggered.  

In this experiment, we randomly generate sparse matrices as follows. First, each element has probability $p$ to be non-zero, independently of each other. Then, each non-zero element is drawn independently from the unit normal distribution. A combination of a small size as well as a low number of non-zero elements sometimes resulted in an error in the ODE method in our experiments\footnote{This error should be fixable, but we preferred not to make changes to the competing method. It is caused by the way in which the method constructs an initial point: if the matrix is very sparse, the method is more likely to construct the zero matrix as a starting point, which results in an error.}. In this experiment, we set $p=0.4$ and sample matrices of size $60 \leq n \leq 260$, in which case we observed no issues. For a sample of 40 matrices for each $n$, Figure \ref{fig:ODE_RO_comparison} shows the median values for the computed distances to singularity as well as the running times. Based on Figure \ref{fig:ODE_RO_comparison}, the method of this paper gives equally good output as the state-of-the-art, while decreasing the running time by one to two orders of magnitude.

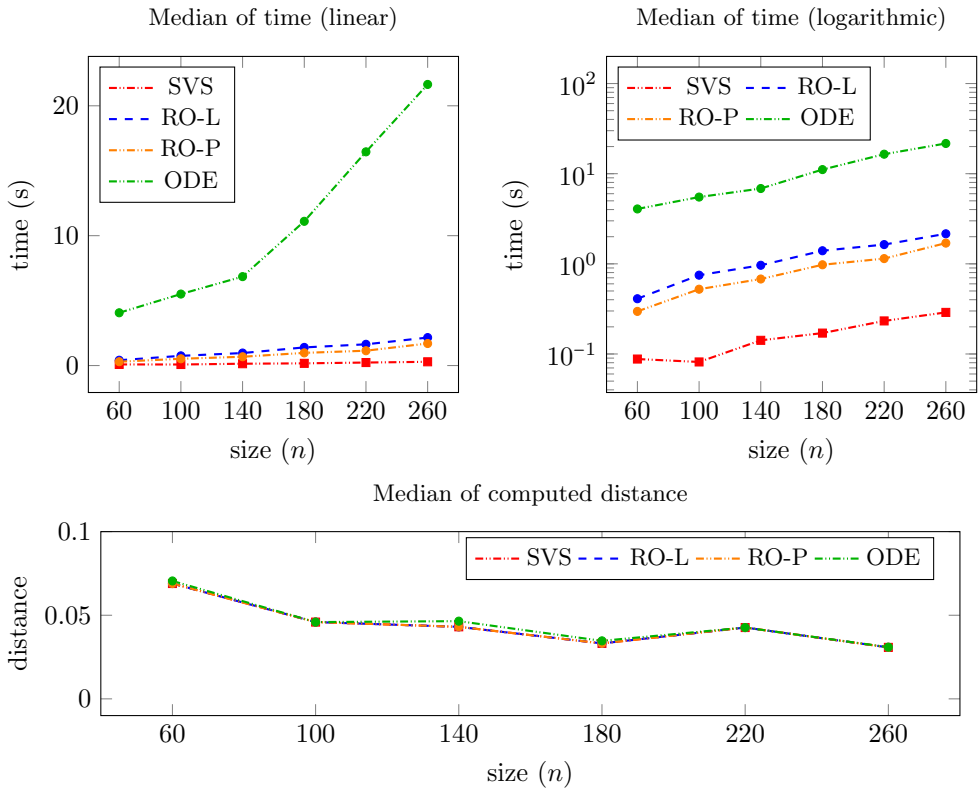
\begin{figure}[h!]
\begin{tikzpicture}
     \begin{axis}[
            legend pos = north west,
            width=0.5*\linewidth,
            xtick={0, 60, 100, 140, 180, 220, 260},
            % log ticks with fixed point,
            % ytick={0.1, 0.3, 1, 3, 10},
           height= 6cm,
           title={{\small{Median of time (linear)}}},
            xlabel={size ($n$)},
            ylabel={time (s)}
        ]
        \addplot[red, densely dashdotdotted, thick] table[y index = 1] 
        {Sparse_times.dat};
         \addplot[blue, dashed, thick] table[y index = 2] 
        {Sparse_times.dat};
        \addplot[orange, densely dashdotdotted, thick] table[y index = 3] 
        {Sparse_times.dat};
        \addplot[black!30!green, densely dashdotdotted, thick] table[y index = 4] 
        {Sparse_times.dat};
        \addplot[only marks, red, mark size =1.5pt,  mark=square*,
    mark options={fill=red, solid}] table[y index = 1] 
        {Sparse_times.dat};
        \addplot[only marks, blue, dashed, mark size =1.5pt,  mark=*, mark options={fill=blue, solid}] table[y index = 2] 
          {Sparse_times.dat};
        \addplot[only marks, orange, dashed, mark size =1.5pt,  mark=*, mark options={fill=orange, solid}] table[y index = 3] 
          {Sparse_times.dat};
          \addplot[only marks, black!30!green, dashed, mark size =1.5pt,  mark=*, mark options={fill=black!30!green, solid}] table[y index = 4] 
          {Sparse_times.dat};
        \legend{{\small{SVS}}, {\small{RO-L}}, {\small{RO-P}}, {\small{ODE}}}
      \end{axis}
    \end{tikzpicture}
    \begin{tikzpicture}
    \begin{semilogyaxis}[
            legend pos = north west,
            legend columns=2,
            % legend pos= outer north east,
            % legend style={at={(0.97,0.4)}},
            width=0.5*\linewidth,
            xtick={0, 60, 100, 140, 180, 220, 260},
            % log ticks with fixed point,
            % ytick={0.1, 0.3, 1, 3, 10},
            ymax=200,
           height= 6cm,
           title={{\small{Median of time (logarithmic)}}},
            xlabel={size ($n$)},
            ylabel={time (s)}
        ]
        \addplot[red, densely dashdotdotted, thick] table[y index = 1] 
        {Sparse_times.dat};
         \addplot[blue, dashed, thick] table[y index = 2] 
        {Sparse_times.dat};
        \addplot[orange, densely dashdotdotted, thick] table[y index = 3] 
        {Sparse_times.dat};
        \addplot[black!30!green, densely dashdotdotted, thick] table[y index = 4] 
        {Sparse_times.dat};
        \addplot[only marks, red, mark size =1.5pt,  mark=square*,
    mark options={fill=red, solid}] table[y index = 1] 
        {Sparse_times.dat};
        \addplot[only marks, blue, dashed, mark size =1.5pt,  mark=*, mark options={fill=blue, solid}] table[y index = 2] 
          {Sparse_times.dat};
        \addplot[only marks, orange, dashed, mark size =1.5pt,  mark=*, mark options={fill=orange, solid}] table[y index = 3] 
          {Sparse_times.dat};
          \addplot[only marks, black!30!green, dashed, mark size =1.5pt,  mark=*, mark options={fill=black!30!green, solid}] table[y index = 4] 
          {Sparse_times.dat};
        \legend{{\small{SVS}}, {\small{RO-L}}, {\small{RO-P}}, {\small{ODE}}}
      \end{semilogyaxis}
    \end{tikzpicture}
    \begin{tikzpicture}
         \begin{axis}[
            legend pos = north east,
            legend columns=0,
            width=\linewidth,
            yticklabel style={
                /pgf/number format/fixed,
                /pgf/number format/precision=3
            },
            xtick={0, 60, 100, 140, 180, 220, 260},
            ytick={0, 0.05, 0.1},
            ymin=-0.01, ymax=0.1,
            height=4cm,
            title={{\small{Median of computed distance}}},
            xlabel={size ($n$)},
            ylabel={distance}
        ]
        \addplot[red, densely dashdotdotted, thick] table[y index = 1] 
        {Sparse_norms.dat};
         \addplot[blue, dashed, thick] table[y index = 2] 
        {Sparse_norms.dat};
        \addplot[orange, densely dashdotdotted, thick] table[y index = 3] 
        {Sparse_norms.dat};
        \addplot[black!30!green, densely dashdotdotted, thick] table[y index = 4] 
        {Sparse_norms.dat};
        \addplot[only marks, red, mark size =1.5pt,  mark=square*,
    mark options={fill=red, solid}] table[y index = 1] 
        {Sparse_norms.dat};
        \addplot[only marks, blue, dashed, mark size =1.5pt,  mark=*, mark options={fill=blue, solid}] table[y index = 2] 
          {Sparse_norms.dat};
        \addplot[only marks, orange, dashed, mark size =1.5pt,  mark=*, mark options={fill=orange, solid}] table[y index = 3] 
          {Sparse_norms.dat};
          \addplot[only marks, black!30!green, dashed, mark size =1.5pt,  mark=*, mark options={fill=black!30!green, solid}] table[y index = 4] 
          {Sparse_norms.dat};
        \legend{{\small{SVS}}, {\small{RO-L}}, {\small{RO-P}}, {\small{ODE}}}
      \end{axis}
    \end{tikzpicture}
    \caption{Comparison between the singular vector space approach of this paper (denoted by SVS), the Riemann-Oracle method of \cite{oracle} (denoted by RO-L and RO-P) and the ODE approach of \cite{Sicilia} for sparse matrices of increasing sizes. RO-L denotes the augmented Lagrangian formulation of \cite{oracle}, while RO-P denotes its penalty method forumulation. The median values of 40 runs were used for plotting. }
    \label{fig:ODE_RO_comparison}
\end{figure}

\section{Conclusion and future work}
In this paper, we proposed a new approach for finding the structured distance to singularity, based on the concept of singular vector spaces. This approach led to a block-coordinate descent type method that alternatingly fixes one variable of the objective function, and finds a global minimizer for the other argument. Numerical experiments showed that the resulting algorithm offers an incredible speedup, sometimes of multiple orders of magnitude, over the state-of-the-art.

Following the steps in \cite{oracle}, it should be possible to adapt the method of this paper for various matrix nearness problems, including the nearest unstable matrix problem, the approximate GCD problem, and the nearest singular matrix polynomial problem. We leave this research direction for future work.

\section*{Acknowledgements} We sincerely thank Nicola Guglielmi, Christian Lubich and Stefano Sicilia for providing the MATLAB codes for their algorithm in \cite{Sicilia}. %\cite[Section V.8]{guglielmi_book}. 
We also thank Ethan N. Epperly, Yuji Nakatsukasa and Taejun Park for kindly providing the function \texttt{svdmin.m}.

\bibliographystyle{abbrv}
\bibliography{references}

\end{document}